\newcommand{\tend}[2]{\xrightarrow[#1\to#2]{}}
\def\E{\mathbb E}
\def\H{\mathbb H}
\newcommand{\tab}{{\mathbf{T}}_{\lambda}(a, b)}
\newcommand{\tkab}{{\mathbf{T}}_{\lambda_k}(a, b)}
\newcommand{\ttab}{{\mathbf{T}}_{\lambda_3}(a, b)}
\newcommand{\rkab}{{\mathbf{R}}_{k}(a, b)}
\newcommand{\rks}{{\mathbf{R}}_{k}(\ell_{s+1}, \ell_{s+2})}
\newtheorem{Th}{Theorem}
\newtheorem{Def}{Definition}
\newtheorem{Rem}{Remark}
\newtheorem{Prop}{Proposition}
\newtheorem{Cor}{Corollary}
\newtheorem{Lemme}{Lemma}
\author{\'Elise Janvresse, Beno\^it Rittaud, Thierry de la Rue}
\address{\'Elise Janvresse, Thierry de la Rue:
Laboratoire de Math\'ematiques Rapha\"el Salem, 
Universit\'e de Rouen, CNRS -- 
Avenue de l'Universit\'e -- 
F76801 Saint \'Etienne du Rouvray.}
\email{Elise.Janvresse@univ-rouen.fr\\Thierry.de-la-Rue@univ-rouen.fr}
\address{Beno\^it Rittaud: Laboratoire Analyse, G\'eom\'etrie et Applications, Universit\'e Paris 13 Institut Galil\'ee, CNRS -- 
99 avenue Jean-Baptiste Cl\'ement -- 
F93 430 Villetaneuse.\\
CNRS/Laboratoire de Math\'ematiques et Physique Th\'eorique, Universit\'e de Tours, UFR Sciences et Techniques -- Parc de Grandmont -- F37000 Tours}
\email{rittaud@math.univ-paris13.fr}
\title[Growth rate of random Fibonacci sequences]{Growth rate for the expected value of a generalized random Fibonacci sequence}
\begin{document}

\bibliographystyle{amsplain}
\maketitle

\begin{center}
{\bf Abstract}\\

\end{center}
A {\em random Fibonacci sequence} is defined by the relation $g_{n}=|g_{n-1}\pm g_{n-2}|$, 
where the $\pm$ sign is chosen by tossing a balanced coin for each $n$. 
We generalize these sequences to the case when the coin is unbalanced (denoting by $p$ 
the probability of a $+$), and the recurrence relation is of the form $g_{n}=|\lambda g_{n-1}\pm g_{n-2}|$.
When $\lambda \ge 2$ and $0<p\le 1$, we prove that the expected value of $g_{n}$ grows exponentially fast. 
When $\lambda=\lambda_{k}=2\cos(\pi/k)$ for some fixed integer $k\geq 3$, we show that the expected value of $g_{n}$ grows exponentially fast for $p>(2-\lambda_k)/4$ and give an algebraic expression for the growth rate.
The involved methods extend (and correct) those introduced in~\cite{rittaud2006}. 

\vspace{1cm}

\noindent \underline{Key words}: binary tree; random Fibonacci sequence; random Fibonacci tree; linear recurring sequence; Hecke group.

\noindent \underline{Mathematical Subject Classification}: 11A55, 15A52 (05c05, 15A35)\\

\section{Introduction}
A {\em random Fibonacci sequence} is a sequence 
$(g_{n})_{n}$ defined 
by its first two terms $g_{1}$ and $g_{2}$ (which in the sequel are
assumed to be positive) and the recurrence relation
$g_{n+1}=\vert g_{n}\pm g_{n-1}\vert$, where for each $n$ the $\pm$ 
sign is chosen 
by tossing a balanced coin. 

A generalization of this notion consists in choosing the $\pm$ sign
by an unbalanced coin (say: $+$ with probability $p$ and $-$ with
probability $q:=1-p$). 
Another possible generalization consists
in fixing two real numbers, $\lambda$ and $\mu$, and considering 
 the recurrence relation $g_{n}=\vert \lambda g_{n-1}\pm \mu
g_{n-2}\vert$, 
where the $\pm$ sign is chosen by tossing a balanced
(or unbalanced) coin for each $n$. 
By considering the modified sequence
${\tilde g}_{n}:=g_{n}/\mu^{n/2}$, 
which satisfies ${\tilde g}_{n}=\vert \frac{\lambda}{\sqrt{\mu}} {\tilde g}_{n-1}\pm {\tilde g}_{n-2}\vert$,
we can always reduce to the case $\mu=1$.
In the following, we refer to the random 
sequences $g_{n}=\vert\lambda g_{n-1}\pm g_{n-2}\vert$ where the $+$
sign is chosen with probability $p$ and the $-$ sign with probability
$q:=1-p$ as {\em $(p,\lambda)$-random Fibonacci sequences}.

In~\cite{janvresse2007}, we investigated the question of the asymptotic growth
rate of almost all $(p,1)$-random Fibonacci sequences. 
We obtained an expression of this limit which is simpler than 
the one given by Divakar Viswanath in~\cite{viswanath2000} and which 
is not restricted to $p=1/2$: It is given by the integral of 
the natural logarithm over a specific measure (depending on $p$) 
defined on Stern-Brocot intervals.
The corresponding results for $(p,\lambda)$-random Fibonacci sequences, which
involve some techniques presented here but also complementary
considerations, is the object of another publication (see~\cite{janvresse2008b}).

\medskip
Here, we are concerned with the evaluation of the limit value of $(\E(g_{n}))^{1/n}$, 
where $(g_{n})_{n}$ is a $(p,\lambda)$-random Fibonacci 
sequence and $\E$ stands for the expectation. 

This limit was studied  in the particular case $p=1/2$ and $\lambda=1$ 
in \cite{rittaud2006}, where the growth rate of the expected value of a $(1/2, 1)$-random Fibonacci 
sequence is proved to be asymptotically equal to
$\alpha-1\approx 1.20556943$, where $\alpha$ is the only real zero of 
$\alpha^3=2\alpha^2+1$. The proof involves the study 
of the binary tree ${\mathbf{T}}$ naturally defined by the set of all 
$(1/2,1)$-random Fibonacci sequences (if a node $a$ has $b$ as a child, 
then the node $b$ has two children, labelled by $a + b$ and by $|a - b|$). 
The study of ${\mathbf{T}}$ is made by considering the biggest subtree of ${\mathbf{T}}$ (denoted by ${\mathbf{R}}$) which
shows no redundance, that is in which we never see two different edges
with the same values $a$ and $b$ (in this order) as  parent and 
child. 
This restricted tree ${\mathbf{R}}$ has many combinatorial and number-theoretic aspects which are of interest.
Let us mention that, after the publication of \cite{rittaud2006}, 
we realized that there was a problem in the last step of the proof of its main result; 
this mistake is explained in~Remark~\ref{Mistake}.

In the present paper, we correct and extend the result of~\cite{rittaud2006} to some 
$(p,\lambda)$-random Fibonacci sequences: For any $p\in[0, 1]$, any $\lambda\ge2$, and 
any $\lambda$ of the form $2\cos(\pi/k)$ (denoted by $\lambda_{k}$), where $k\geq 3$ is an integer. 

For $\lambda=\lambda_k$, the combinatorial properties of the tree ${\mathbf{R}}$ extends in a
surprinsingly elegant way, leading to an extremely natural
generalization of the results previously mentioned. In particular, the link made in~\cite{janvresse2007} and~\cite{rittaud2006} 
between the tree ${\mathbf{R}}$ and continued fraction expansion remains true for
$\lambda_{k}=2\cos(\pi/k)$ and corresponds to so-called Rosen
continued fractions, a notion introduced by David Rosen in~\cite{rosen1954}. We will not resort to continued fractions in the present work, but this aspect is presented in~\cite{janvresse2008b}.

An interesting fact to notice is that the values $\lambda_{k}$ and $\lambda>2$  are the only ones for which 
the group of transformations of the hyperbolic half plane $\H^2$ generated by the transformations $z\longmapsto -1/z$ and
$z\longmapsto z+\lambda$, said to be a {\em Hecke group}, is discrete (cf.~\cite{hecke1936}). 
We will not use that fact in the following, but it suggests that it 
is highly probable that some link is to be made between random Fibonacci sequences 
and hyperbolic geometry; in particular, possible future extensions of
the combinatorial point of view given by the restricted trees
defined below could have some interpretation in hyperbolic geometry
for values of $\lambda$ for which the corresponding M\"obius group is 
not discrete.

\medskip
We are grateful to Kevin Hare, of University of Waterloo (Canada),
for helpful comments, and to Jean-Fran\c{c}ois Quint, of CNRS and Universit\'e Paris-13 (France),
for fruitful discussions on hyperbolic geometry.

\section{Results}
Our main results are the following theorems.

\begin{Th}\label{Average1} 
Let $p\in [0,1]$, $k\ge 3$ and $m_{n}$ be the expected value of
the $n$-th term of a $(p,\lambda_{k})$-random Fibonacci sequence. 
\begin{itemize}
 \item If $p>p_c:=(2-\lambda_k)/4$, then
$$ \frac{m_{n+1}}{m_n}\ \tend{n}{\infty}\ \alpha_k(p)\left(1+\frac{pq^{k-1}}{\alpha_k(p)^k}\right) > 1, $$
where $\alpha_k(p)$ is the only positive root of the polynomial 
$$P_{k}(X):=X^{2k}-\lambda_{k}X^{2k-1}-(2p-1)X^{2k-2}-\lambda_k pq^{k-1}X^{k-1}-p^2q^{2k-2}.$$
\item If $p=p_c$, then $(m_{n})_{n}$ grows at most linearly.
\item If $p< p_c$, then $(m_{n})_{n}$ is bounded.
\end{itemize}
\end{Th}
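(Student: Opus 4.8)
The plan is to realize $m_n$ as a weighted sum over the tree $\tkab$ of all $(p,\lambda_k)$-random Fibonacci sequences, and then to transfer the computation to the redundancy-free subtree $\rkab$, whose self-similarity produces a linear recurrence. Encoding a sequence by the state $(g_{n-1},g_n)$, the recursion $g_n=|\lambda_k g_{n-1}\pm g_{n-2}|$ identifies the set of sequences with $\tkab$, each node carrying the value of the corresponding term and each of its two downward edges carrying the weight $p$ (a $+$ step) or $q$ (a $-$ step). By definition of the expectation,
\[ m_n \;=\; \sum_{\beta}\ \Bigl(\prod_{e\in\beta} w_e\Bigr)\, v_\beta, \]
where $\beta$ ranges over the branches of length $n-2$ starting at the root, $w_e\in\{p,q\}$ is the weight of the edge $e$, and $v_\beta$ is the value of the node reached by $\beta$. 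First I would record this identity and reduce it to a sum over $\rkab$, using that $\tkab$ is reconstructed from $\rkab$ by grafting, at each redundant reappearance of a pair $(a,b)$, a fresh copy of the subtree rooted at that pair.

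The core of the argument is the elliptic nature of the $-$ map at $\lambda=\lambda_k$. As long as no absolute value is triggered, a $-$ step sends $(a,b)$ to $(b,\lambda_k b-a)$, that is, acts by the matrix $\left(\begin{smallmatrix}0&1\\-1&\lambda_k\end{smallmatrix}\right)$, whose characteristic polynomial $t^2-\lambda_k t+1$ has roots $e^{\pm i\pi/k}$. This map is therefore conjugate to the rotation of angle $\pi/k$, so its $k$-th iterate equals $-\mathrm{Id}$ and its $2k$-th iterate is the identity. I would use this periodicity to show that, starting from an admissible pair, at most $k-1$ consecutive $-$ steps can be taken before the rotation drives the state out of the positive cone and an absolute value (a reflection) intervenes. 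This is precisely what makes $\rkab$ finitely describable: its branches decompose into $-$ chains of length at most $k-1$ separated by $+$ steps, and the period $2k$ of the rotation is what governs both the degree $2k$ of $P_k$ and the appearance of the factor $pq^{k-1}$, the weight of a maximal run of $k-1$ minus-steps closed by a plus.

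Next I would convert the recursive decomposition of $\rkab$ into a linear recurrence, equivalently a rational generating function, for the restricted-tree sums; collecting the contributions of the $+$ step, of a full $-$ chain, and of the reflection that closes it should reproduce exactly the polynomial $P_k$, with $\alpha_k(p)$ its unique positive root (uniqueness and simplicity following from a sign analysis of $P_k$ on $(0,\infty)$). Reassembling $m_n$ from the restricted-tree sums together with the grafted copies contributes the extra factor $1+pq^{k-1}/\alpha_k(p)^k$, yielding $m_{n+1}/m_n\to\alpha_k(p)\bigl(1+pq^{k-1}/\alpha_k(p)^k\bigr)=:r$. The three regimes would then be read off from the position of the dominant singularity of the generating function relative to the unit circle: a direct computation gives $r>1$ exactly when $p>p_c=(2-\lambda_k)/4$; the threshold $p=p_c$ is the value at which this singularity reaches the circle, and examining its order there yields the at-most-linear growth; for $p<p_c$ the singularity has moved so that $(m_n)$ remains bounded. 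For genuine convergence of the ratios (and not merely the exponential order) I would also check that for $p>p_c$ this singularity is simple and strictly dominant.

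The hard part is the second step: pinning down the exact recursive decomposition of $\rkab$ and, from it, the precise coefficients of $P_k$. One must control how the absolute value interacts with the elliptic dynamics --- where a $-$ chain terminates, which pair it reflects onto, and with what multiplicity the grafted copies recur --- while tracking the edge-weights through a full period of the rotation. This bookkeeping is exactly where the argument of \cite{rittaud2006} went wrong (see Remark~\ref{Mistake}); carrying it out correctly for general $k$ and general $p$, and then rigorously justifying the passage from the formal generating-function identity to the stated limit $m_{n+1}/m_n\to r$, is where the real work lies.
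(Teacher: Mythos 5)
Your outline reproduces the paper's strategy --- the reduced tree $\rkab$, the rotation property of the left-step matrix (Proposition~\ref{Retour}: $RL^{k-1}=\bigl(\begin{smallmatrix}1&0\\0&-1\end{smallmatrix}\bigr)$), a linear recurrence for the reduced-tree row averages with characteristic polynomial $P_k$ (Proposition~\ref{recurrenceMn}, Lemma~\ref{Polyk}), then reassembly of $\tkab$ --- but the steps you defer as ``bookkeeping'' are where the theorem actually lives, and two of the mechanisms you do commit to are wrong. The reassembly is not a ``grafted copies contribute an extra factor'' statement: a trajectory of length $n$ in $\tkab$ can absorb up to $\lfloor n/k\rfloor$ insertions of the pattern $RL^{k-1}$, counted by $c_{n,m}={n\choose m}-(k-1){n\choose m-1}$ (Proposition~\ref{SuccNu}), and summing $c_{n,m}\bigl(pq^{k-1}/x^k\bigr)^m$ over $m$ produces an \emph{exponential} correction, essentially $\bigl(1+pq^{k-1}/x^k\bigr)^n$; this is precisely why the limit of $m_{n+1}/m_n$ is $f(\alpha_k)$, with $f(x)=x\bigl(1+pq^{k-1}/x^k\bigr)$, rather than $\alpha_k$. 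Extracting that limit requires a binomial law-of-large-numbers argument (Proposition~\ref{Unj}), valid only when $\alpha_k^k>(k-1)pq^{k-1}$; and since $f$ is \emph{not monotone} (it decreases to a minimum at $\bigl((k-1)pq^{k-1}\bigr)^{1/k}$, then increases), strict dominance of $\alpha_k$ among the roots of $P_k$ does not by itself imply dominance of its contribution $f(\alpha_k)^n$ over those of the other roots. The paper can only close this point indirectly: for $p>1/k$ it invokes the almost-sure exponential growth proved in~\cite{janvresse2008b} together with Jensen's inequality to rule out the bad configuration, so no ``direct computation'' is available here. Separately, the copies of the reduced tree are rooted along the leftmost branch of $\tkab$ at the pairs $(\ell_{s+1},\ell_{s+2})$; your grafting picture omits this index $s$, and uniform control of the resulting coefficients $Q_{j,s}$ requires boundedness of the sequence $(\ell_s)$ --- Proposition~\ref{EllBornee}, which for $k>3$ needs a separate geometric argument and is one of the genuinely delicate points of the paper.

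Your treatment of the subcritical regime also fails as stated. The factorization in Lemma~\ref{Polyk} shows that $P_k$ has, for \emph{every} $p$, a conjugate pair of roots of modulus $q$ coming from the factor $X^2-q\lambda_k X+q^2$; when $p<p_c$ this pair strictly dominates $\alpha_k<q$, so the singularity does not ``move''. Moreover, since $f(q)=1$, root location alone only bounds the corresponding contribution to $m_n$ by $O(n)$ (the $x=q$ case of Proposition~\ref{Unj}), i.e.\ it yields the same conclusion as at criticality, not boundedness. Boundedness holds because the coefficients attached to this modulus-$q$ pair vanish identically, which the paper proves by a positivity argument: nonzero coefficients would force $M(\pi_{n+2})\sim 2q^n\,\Re\bigl(Q_1e^{in\theta}\bigr)$ to oscillate in sign, contradicting positivity of the row averages (Lemma~\ref{C0isPositive}). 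Without a counterpart of this lemma, the third assertion of the theorem is out of reach and the second is only partially justified.
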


% Observe that $P_{k}(q)= q^{2k-2}(2-\lambda_{k}-4p)$. Therefore, for $p=p_c$, we have $\alpha_k(p)=q$, thus 
% $\alpha_k(p)\left(1+\frac{pq^{k-1}}{\alpha_k(p)^k}\right)=1$.

Note that by Jensen's inequality, we have $\E[g_n]^{1/n} \ge\E\left[g_n^{1/n}\right]$. 
Hence the critical value for the growth rate of the expected value is smaller than the critical value when one considers the almost-sure growth rate. 
It is proved in~\cite{janvresse2008b} that the latter is equal to $1/k$, which is strictly larger than $p_c=(2-\lambda_k)/4=(1-\cos (\pi/k))/2$.

\begin{Th}
\label{lambda_grand}
Let $\lambda\ge2$, $0<p\le 1$, and $m_{n}$ be
the expected value of the $n$-th term of a $(p, \lambda)$-random
Fibonacci sequence. Then 
$$ \frac{m_{n+1}}{m_n}\ \tend{n}{\infty}\ \frac{\lambda + \sqrt{\lambda^2+4(2p-1)}}{2}. $$
\end{Th}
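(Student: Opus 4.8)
The plan is to exploit the fact that for $\lambda\ge2$ the absolute value in the recurrence is essentially inactive, so that $(m_n)$ satisfies a genuine linear recurrence whose dominant root is the announced limit. First I would record a deterministic monotonicity property: if $g_{n-1}\le g_n$ with both positive, then for $\lambda\ge2$ one has $\lambda g_n-g_{n-1}\ge(\lambda-1)g_n\ge g_n>0$, so both choices give $g_{n+1}=\lambda g_n\pm g_{n-1}\ge g_n$; thus $g_{n+1}\ge g_n$ and the modulus is never taken. By induction, whenever the first two terms satisfy $g_1\le g_2$, every term obeys the \emph{signed} relation $g_{n+1}=\lambda g_n\pm g_{n-1}$, with the sign equal to $+$ with probability $p$ independently of $g_1,\dots,g_n$.

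Next I would take expectations. Since at step $n+1$ the sign is independent of the past, $\E[g_{n+1}\mid g_1,\dots,g_n]=p(\lambda g_n+g_{n-1})+q(\lambda g_n-g_{n-1})=\lambda g_n+(2p-1)g_{n-1}$, hence
$$ m_{n+1}=\lambda m_n+(2p-1)m_{n-1}\qquad(n\ge2). $$
This is a second-order linear recurrence with characteristic polynomial $X^2-\lambda X-(2p-1)$, whose roots are $r_\pm=\bigl(\lambda\pm\sqrt{\lambda^2+4(2p-1)}\bigr)/2$. For $\lambda\ge2$ and $0<p\le1$ the discriminant $\lambda^2+4(2p-1)$ is strictly positive (it equals $8p>0$ when $\lambda=2$), so the roots are real and distinct, and $r_+$ is exactly the announced value.

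It then remains to show that $m_{n+1}/m_n\to r_+$. I would first check that $r_+$ strictly dominates $r_-$ in modulus: from $r_++r_-=\lambda>0$ and $r_+-r_-=\sqrt{\lambda^2+4(2p-1)}>0$ one gets $r_+>|r_-|$ both when $r_-\ge0$ and when $r_-<0$. Writing $m_n=A\,r_+^{\,n}+B\,r_-^{\,n}$, the ratio converges to $r_+$ as soon as $A\ne0$. To see $A>0$, set $u_n:=m_{n+1}-r_-m_n$; a direct computation gives $u_n=A(r_+-r_-)\,r_+^{\,n}$, so $u_n$ has the sign of $A$. Since the pathwise bound $g_{n+1}\ge g_n$ yields $m_{n+1}\ge m_n$, and since $r_-<1$ for $0<p\le1$ (the condition $r_-<1$ reduces to $p>1-\lambda/2$, which holds as $\lambda\ge2$), we obtain $u_n\ge(1-r_-)m_n>0$. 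Hence $A>0$ and $m_{n+1}/m_n\to r_+$.

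The main obstacle is that the clean recurrence above was derived under the hypothesis $g_1\le g_2$, whereas the theorem is stated for arbitrary positive initial data. When $g_1>g_2$ the sequence may first undergo a Euclidean-type descent during which the modulus is genuinely taken, so $(m_n)$ need not satisfy the linear recurrence for small $n$. I would therefore need to argue that the growth rate is independent of the positive initial condition, for instance by showing that the random first time at which $g_{n-1}\le g_n$ is almost surely finite with enough integrability, or by directly comparing the sequence started from $(g_1,g_2)$ with one started from monotone data. Establishing this independence, rather than the eigenvalue computation, is where the real work lies.
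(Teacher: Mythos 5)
For initial data with $g_1\le g_2$ your argument is correct and is essentially the paper's own: your monotonicity observation is the paper's Lemma~\ref{Lem:lineaire} (proved there by the same induction), the recurrence $m_{n+1}=\lambda m_n+(2p-1)m_{n-1}$ is its Corollary, and your positivity argument for the leading coefficient (via $u_n=m_{n+1}-r_-m_n$ and $r_-<1$) is a legitimate variant of the paper's (which instead uses $m_n\ge b>0$ together with $r_+>1$ and $|r_-|<1$). So far, so good.

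The genuine gap is the case $g_1>g_2$, which you explicitly leave open; but this is precisely where the paper invests its work, so your proposal does not prove the theorem as stated. The paper closes this case by decomposing the full tree $\mathbf{T}_{\lambda}(a,b)$ along its leftmost branch $\ell_1=a$, $\ell_2=b$, $\ell_{s+1}=|\lambda\ell_s-\ell_{s-1}|$: each right child hanging off this branch roots a copy of $\mathbf{T}_{\lambda}(\ell_s,\lambda\ell_s+\ell_{s-1})$, whose initial labels \emph{are} well ordered, so the already-proved case applies to each piece. Two situations are then distinguished. If some $\ell_{S+1}\ge\ell_S$ (which happens in particular whenever $(\ell_s)$ is unbounded), the decomposition is finite and the limit follows from a finite convex combination. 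Otherwise $(\ell_s)$ is decreasing, hence bounded, and one gets an infinite series of the form $m_{n+2}=q^n\ell_{n+2}+\sum_{s=0}^{n-1}q^sp\left(C_s r_+^{\,n-s+1}+C'_s r_-^{\,n-s+1}\right)$ with bounded coefficients, which is then shown to be asymptotic to $K r_+^{\,n}$ with $K>0$. Your suggested fix --- almost-sure finiteness of the first time $T$ at which $g_{n-1}\le g_n$ --- is the right idea (indeed $T$ is dominated by a geometric variable, since a single $+$ step forces $g_{n+1}=\lambda g_n+g_{n-1}>g_n$ and monotonicity thereafter), but stating that ``the real work lies'' there is not the same as doing it: one must sum the contributions over all values of $T$, check that the residual all-minus-steps term $q^n\ell_{n+2}$ is negligible against $r_+^{\,n}$, and verify that the resulting leading constant is strictly positive. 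Until that bookkeeping is carried out, your proof establishes the theorem only under the extra hypothesis $g_1\le g_2$.
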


\medskip
In view of the study of $(p,\lambda)$-random Fibonacci sequences for
other values of $\lambda$, we also investigate an aspect of the
regularity of the behaviour of the growth rate of such a sequence in 
the neighbourhood of $\lambda=2$. 

\begin{Cor}\label{Cor:analyticite_p} 
Let $0<p\le 1$. 
If we assume that, for any $\lambda$ in the neighbourhood of 2, the expected value of a $(p,\lambda)$-random Fibonacci sequence increases exponentially fast with growth rate equal to ${\mathcal{G}}(\lambda)$, then ${\mathcal{G}}$ cannot be analytic at $\lambda =2$.
\end{Cor}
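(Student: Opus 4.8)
The plan is to pit the two explicit formulas for the growth rate against each other across $\lambda=2$ and to exploit the rigidity of analytic functions. By Theorem~\ref{lambda_grand}, for every $\lambda\ge 2$ and $0<p\le 1$ the growth rate equals
$$\mathcal{G}_+(\lambda):=\frac{\lambda+\sqrt{\lambda^2+4(2p-1)}}{2}.$$
Since $\lambda^2+4(2p-1)$ equals $8p>0$ at $\lambda=2$, this expression is real-analytic on an open interval around $2$, and so the first thing I would record is that $\mathcal{G}(\lambda)=\mathcal{G}_+(\lambda)$ on $[2,2+\epsilon)$; this pins down the right-hand behaviour of $\mathcal{G}$ completely by a single analytic function.

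Then I would argue by contradiction: assume $\mathcal{G}$ is analytic at $\lambda=2$. The function $\mathcal{G}-\mathcal{G}_+$ is then analytic on a neighbourhood of $2$ and vanishes on $[2,2+\epsilon)$, a set admitting $2$ as an interior accumulation point; by the identity theorem for real-analytic functions, $\mathcal{G}-\mathcal{G}_+$ vanishes identically on a whole interval $(2-\delta,2+\delta)$. In particular $\mathcal{G}(\lambda_k)=\mathcal{G}_+(\lambda_k)$ would have to hold for the values $\lambda_k=2\cos(\pi/k)$ once $k$ is large (note $\lambda_k\uparrow 2$, and for large $k$ one has $p>p_c=(2-\lambda_k)/4\to 0$, so Theorem~\ref{Average1} is applicable). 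The whole matter thus reduces to exhibiting a strict discrepancy $\mathcal{G}(\lambda_k)\neq\mathcal{G}_+(\lambda_k)$ for all large $k$.

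To establish that discrepancy I would take $0<p<1$ (so that $q>0$) and set $\beta_k:=\mathcal{G}_+(\lambda_k)$, the positive root of $X^2-\lambda_k X-(2p-1)$. Factoring the defining polynomial as
$$P_k(X)=X^{2k-2}\bigl(X^2-\lambda_k X-(2p-1)\bigr)-\lambda_k pq^{k-1}X^{k-1}-p^2q^{2k-2},$$
the first term vanishes at $X=\beta_k$, so $P_k(\beta_k)=-\lambda_k pq^{k-1}\beta_k^{k-1}-p^2q^{2k-2}<0$. Since $P_k(X)\to+\infty$ as $X\to+\infty$ and $\alpha_k(p)$ is the unique positive root, this forces $\alpha_k(p)>\beta_k$. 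As the factor $1+pq^{k-1}/\alpha_k(p)^k$ exceeds $1$ (because $p,q>0$), I obtain $\mathcal{G}(\lambda_k)=\alpha_k(p)\bigl(1+pq^{k-1}/\alpha_k(p)^k\bigr)>\alpha_k(p)>\beta_k=\mathcal{G}_+(\lambda_k)$, contradicting the previous paragraph and proving non-analyticity.

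The conceptual heart of the statement — and what makes the result delicate rather than its proof hard — is that the discrepancy $\mathcal{G}(\lambda_k)-\mathcal{G}_+(\lambda_k)$ is exponentially small: the estimates above make it of order $(q/\beta_k)^k$, and since $2-\lambda_k\sim\pi^2/k^2$ one has $k\sim\pi/\sqrt{2-\lambda}$, so the gap behaves like $\exp(-c/\sqrt{2-\lambda})$ with $c>0$. Hence $\mathcal{G}$ may well be infinitely differentiable at $2$ while failing to be analytic, in the manner of $e^{-1/x}$; the point of the argument is that one never needs to quantify this decay, the strict inequality $P_k(\beta_k)<0$ together with the identity theorem being enough. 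I would close by noting that the hypothesis truly requires $p<1$: at $p=1$ the sequence is deterministic, $q=0$ gives $\alpha_k(1)=\beta_k$ and $\mathcal{G}=\mathcal{G}_+$, which is analytic, so the endpoint $p=1$ must be excluded.
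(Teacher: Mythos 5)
Your proof is correct for $0<p<1$ and follows essentially the same strategy as the paper: pin $\mathcal{G}$ to the analytic function $\mathcal{G}_+(\lambda)=\bigl(\lambda+\sqrt{\lambda^2+4(2p-1)}\bigr)/2$ on $[2,2+\epsilon)$ via Theorem~\ref{lambda_grand}, invoke analytic rigidity, and contradict this at the points $\lambda_k\uparrow 2$ using Theorem~\ref{Average1}. The differences are in the details, and two of them are worth recording. First, the paper establishes the discrepancy at $\lambda_k$ by an exact computation, plugging $\mathcal{G}(\lambda_k)=\alpha_k(p)\bigl(1+pq^{k-1}/\alpha_k(p)^k\bigr)$ into $Q_{\lambda_k}(X)=X^2-\lambda_k X-(2p-1)$ and finding $\alpha_k(p)^{2k-2}\,Q_{\lambda_k}\bigl(\mathcal{G}(\lambda_k)\bigr)=2pq^{k-1}\bigl(\alpha_k(p)^k+pq^{k-1}\bigr)$, which rules out both roots of the quadratic; you instead get the one-sided inequality $\mathcal{G}(\lambda_k)>\alpha_k(p)>\beta_k=\mathcal{G}_+(\lambda_k)$ from the sign of $P_k(\beta_k)$, which is more elementary and suffices since the identity theorem only requires $\mathcal{G}(\lambda_k)\neq\mathcal{G}_+(\lambda_k)$. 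You also spell out the identity-theorem step (vanishing on $[2,2+\epsilon)$ forces vanishing on a full neighbourhood), which the paper leaves implicit after showing $\mathcal{G}(\lambda_k)$ is not a root of $Q_{\lambda_k}$. Second, and more importantly, your closing remark about $p=1$ is a genuine catch rather than a cosmetic caveat: when $q=0$ the paper's right-hand side $2pq^{k-1}\bigl(\alpha_k(p)^k+pq^{k-1}\bigr)$ vanishes, so its claimed strict positivity fails there; and indeed for $p=1$ the sequence is deterministic, $P_k(X)=X^{2k-2}\bigl(X^2-\lambda_k X-1\bigr)$, so $\mathcal{G}(\lambda_k)=\alpha_k(1)=\mathcal{G}_+(\lambda_k)$ and $\mathcal{G}(\lambda)=\bigl(\lambda+\sqrt{\lambda^2+4}\bigr)/2$ is analytic at $2$. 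The corollary as stated for $0<p\le 1$ is therefore false at the endpoint $p=1$ and should be restricted to $0<p<1$ (equivalently $q>0$), exactly as you conclude.
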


In the particular case $p=1/2$, we can even prove the non-analyticity of the growth rate on any left neighbourhood of $2$. 

\begin{Cor}\label{Cor:analyticite_1/2} 
Let $p=1/2$. Assume that, for any $\lambda\leq 2$, the expected value of a $(1/2,\lambda)$-random Fibonacci 
sequence increases exponentially fast, with growth rate equal to ${\mathcal{G}}(\lambda)$.
If ${\mathcal{G}}$ is differentiable at $\lambda=2$, then ${\mathcal{G}}'(2)=1$. 
If ${\mathcal{G}}$ is of class $C^n$ at $\lambda=2$, then ${\mathcal{G}}^{(i)}(2)=0$ for any $i\in [2,n]$. 
As a corollary, ${\mathcal{G}}$ cannot be analytic at $\lambda =2$.
\end{Cor}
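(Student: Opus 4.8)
The plan is to compare the values of ${\mathcal G}$ at the points $\lambda_k=2\cos(\pi/k)$, which accumulate at $2$ from below, with the behaviour forced on the right of $2$. For $p=1/2$ one has $q=1/2$ and $2p-1=0$, so Theorem~\ref{lambda_grand} gives, for every $\lambda\ge 2$, ${\mathcal G}(\lambda)=\tfrac12(\lambda+\sqrt{\lambda^2})=\lambda$; in particular ${\mathcal G}(2)=2$, and the Taylor data of ${\mathcal G}$ coming from the right is that of the identity. The whole point is therefore to show that the left-side values ${\mathcal G}(\lambda_k)$, dictated by Theorem~\ref{Average1}, agree with $\lambda_k$ up to an error that is super-polynomially small in $2-\lambda_k$, while being strictly larger than $\lambda_k$. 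Specialising Theorem~\ref{Average1} to $p=1/2$, the polynomial becomes $P_k(X)=X^{2k}-\lambda_kX^{2k-1}-\lambda_k2^{-k}X^{k-1}-2^{-2k}$ and the growth rate is
\[ {\mathcal G}(\lambda_k)=\alpha_k\bigl(1+(2\alpha_k)^{-k}\bigr), \]
where $\alpha_k:=\alpha_k(1/2)$ is the positive root of $P_k$. Dividing $P_k(\alpha_k)=0$ by $\alpha_k^{2k-1}$ yields the fixed-point relation
\[ \alpha_k=\lambda_k+\lambda_k(2\alpha_k)^{-k}+2^{-2k}\alpha_k^{1-2k}. \]
Since $\alpha_k\to2$, the two correction terms are positive and $O(4^{-k})$, so that $\alpha_k>\lambda_k$ and $\alpha_k-\lambda_k=O(4^{-k})$; plugging this back gives ${\mathcal G}(\lambda_k)-\lambda_k=(\alpha_k-\lambda_k)+\alpha_k(2\alpha_k)^{-k}>0$ and ${\mathcal G}(\lambda_k)-\lambda_k=O(4^{-k})$.

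Next I would translate this exponential-in-$k$ smallness into super-polynomial smallness in $2-\lambda_k$. From $\lambda_k=2\cos(\pi/k)=2-\pi^2/k^2+O(k^{-4})$ one gets $2-\lambda_k\sim\pi^2/k^2$, hence $k\sim\pi/\sqrt{2-\lambda_k}$ and, for every fixed $m$,
\[ \frac{{\mathcal G}(\lambda_k)-\lambda_k}{(2-\lambda_k)^m}=O\!\left(k^{2m}4^{-k}\right)\tend{k}{\infty}0. \]
Thus ${\mathcal G}(\lambda_k)=\lambda_k+o\bigl((2-\lambda_k)^m\bigr)$ for every $m$, while still ${\mathcal G}(\lambda_k)>\lambda_k$.

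The conclusions then follow by matching this against the Taylor data of ${\mathcal G}$ at $2$. If ${\mathcal G}$ is differentiable at $2$, evaluating the difference quotient along $\lambda_k\uparrow2$ gives
\[ {\mathcal G}'(2)=\lim_{k\to\infty}\frac{{\mathcal G}(\lambda_k)-{\mathcal G}(2)}{\lambda_k-2}=\lim_{k\to\infty}\left(1+\frac{{\mathcal G}(\lambda_k)-\lambda_k}{\lambda_k-2}\right)=1. \]
If ${\mathcal G}$ is of class $C^n$ at $2$, Taylor's formula with Peano remainder reads ${\mathcal G}(\lambda)=\sum_{i=0}^n\frac{{\mathcal G}^{(i)}(2)}{i!}(\lambda-2)^i+o((\lambda-2)^n)$; subtracting the expansion ${\mathcal G}(\lambda_k)=2+(\lambda_k-2)+o((\lambda_k-2)^n)$ obtained above leaves a polynomial in $(\lambda_k-2)$ which is $o((\lambda_k-2)^n)$, and, $\lambda_k-2$ running through a sequence tending to $0$, all its coefficients must vanish. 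This forces ${\mathcal G}'(2)=1$ and ${\mathcal G}^{(i)}(2)=0$ for $2\le i\le n$. Finally, if ${\mathcal G}$ were analytic at $2$, these relations would force its Taylor series to be $2+(\lambda-2)=\lambda$, hence ${\mathcal G}(\lambda)=\lambda$ on a left neighbourhood of $2$, contradicting ${\mathcal G}(\lambda_k)>\lambda_k$ for large $k$.

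I expect the main obstacle to be the analysis of the root $\alpha_k$ in the first step: one must show that $\alpha_k$ stays bounded away from $0$ and tends to $2$ uniformly enough to justify that the two correction terms in the fixed-point relation are genuinely $O(4^{-k})$, and to guarantee their positivity so that the strict inequality ${\mathcal G}(\lambda_k)>\lambda_k$ holds. Once this estimate is in place, the remaining steps are a routine extraction of Taylor coefficients along the sequence $(\lambda_k)$.
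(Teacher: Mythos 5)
Your proof is correct and follows essentially the same route as the paper's: evaluate ${\mathcal G}$ at $\lambda_k$ via Theorem~\ref{Average1}, show that ${\mathcal G}(\lambda_k)-\lambda_k$ is positive and exponentially small in $k$, hence $o\bigl((2-\lambda_k)^m\bigr)$ for every $m$, and match the Taylor data along $\lambda_k\uparrow 2$; the only difference is cosmetic, namely the paper first factors $P_k(X)=(X^k+2^{-k})\bigl(X^k-\lambda_kX^{k-1}-2^{-k}\bigr)$ and works with the degree-$k$ factor, which yields ${\mathcal G}(\lambda_k)=2\alpha_k-\lambda_k$ and $\alpha_k^{k-1}(\alpha_k-\lambda_k)=2^{-k}$ in place of your fixed-point relation. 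The ``main obstacle'' you flag is in fact immediate: every term on the right-hand side of your fixed-point relation is positive, so $\alpha_k>\lambda_k\ge 1$ for $k\ge3$ with no asymptotics needed, and then both correction terms are at most $2^{1-k}$; this $O(2^{-k})$ bound (rather than the sharper $O(4^{-k})$, which would require $\alpha_k\to2$) already gives all the super-polynomial smallness your Taylor-matching step requires.
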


\bigskip
We first prove Theorem~\ref{lambda_grand} in Section~\ref{Sec:lambda_grand}, since the case $\lambda\ge2$ is much easier.
Section~\ref{Combi} is devoted to general facts about the reduced tree for $\lambda=\lambda_k$.
It introduces notations and useful tools for the proof of Theorem~\ref{Average1}. 
This theorem is proved in Sections~\ref{Averagepi},~\ref{Averagepsi} and~\ref{end}, by extending the ideas introduced in \cite{rittaud2006} for the case $k=3$ and $p=1/2$. 
Proofs of corollaries~\ref{Cor:analyticite_p} and~\ref{Cor:analyticite_1/2} about the non-analyticity in the neighbourhood of $2$ can be found in Section~\ref{Sec:proof_of_cor}.
Section~\ref{Sec:open} contains a discussion about open questions.

\section{Case $\lambda\ge 2$}
\label{Sec:lambda_grand}

We introduce the tree $\tab$, which shows all different possible $(p,\lambda)$-random Fibonacci sequences with first positive terms $g_1=a$ and $g_2=b$: 
The root of the tree $\tab$ is labelled by $a$, its unique
child is labelled by $b$, and each node of the tree labelled by $\beta$
and with parent labelled by $\alpha$ has exactly two children, the right
one labelled by $\lambda\beta+\alpha$ and the left one by $|\lambda\beta-\alpha|$. 
When $\beta$ is the label of a node in $\tab$ with parent labelled by $\alpha$, 
it will be convenient to consider the vector $(\alpha, \beta)$ as the label of 
the corresponding edge. 
We also talk about the right and left children of an edge labelled $(\alpha, \beta)$, 
which are respectively the edges labelled by 
$(\beta, \lambda\beta + \alpha )$ and $(\beta, |\lambda\beta - \alpha |)$.

We also introduce the weight of an edge in the tree $\tab$, corresponding to the probability that a $(p, \lambda)$-random Fibonacci sequence passes through this edge. 
More formally, the initial edge in $\tab$ has weight $1$. 
If an edge has weight $w$, then its right and left children have respective weight $pw$ and $qw$. (Recall that $q=1-p$.)

We organize the edges of the tree $\tab$ in rows: 
The initial edge is the only edge in row 2; any child of an edge in row $n$ is in row $n+1$.
For any $n\geq 2$, we denote by $\psi_{n}$ the set of edges in row $n$ in $\tab$.

\begin{center}
\begin{figure}
\input{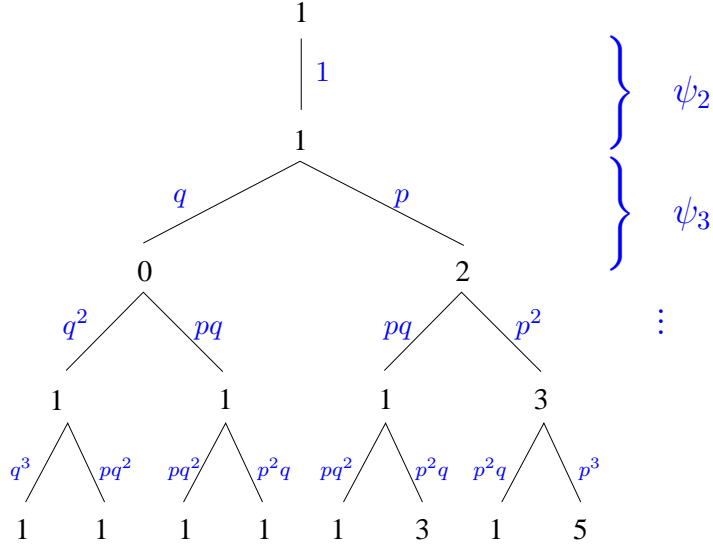}
\caption{The beginning of the tree ${\mathbf{T}}_{1}(1, 1)$.}
\end{figure} 
\end{center}

The {\em average} of any subset $X$ of edges of $\tab$ is defined as the sum $M(X)$ 
of all terms of the form $\beta w$, where $\beta$ is the second coordinate of an edge in $X$ 
and $w$ is the weight of this edge. 
Observe that the expected value $m_n$ of the $n$-th term of a $(p,\lambda)$-random Fibonacci is given by $M(\psi_n)$. 

It will be of interest to consider the sequence $(\ell_s)$ of numbers read along the leftmost branch of $\tab$: $\ell_1:=a$, $\ell_2:=b$ and $\ell_{s+1}:=| \lambda\ell_{s}-\ell_{s-1}|$ for $s\ge 2$.

\medskip
We now turn to the proof of Theorem~\ref{lambda_grand}.% when $b\ge a$:

\begin{Lemme}
\label{Lem:lineaire}
If $b\ge a$ and $\lambda\ge 2$, then for any edge in $\tab$ labelled by $(\alpha, \beta)$, we have 
$\beta\ge \alpha$.
\end{Lemme}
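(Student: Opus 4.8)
The plan is to argue by induction on the row index $n\ge 2$, establishing the slightly stronger statement that every edge of $\tab$ in row $n$ has the form $(\alpha,\beta)$ with $\beta\ge\alpha\ge 0$. Nonnegativity of all labels is automatic, since they are generated from the positive initial values $a,b>0$ by the two operations $(\alpha,\beta)\mapsto(\beta,\lambda\beta+\alpha)$ and $(\alpha,\beta)\mapsto(\beta,|\lambda\beta-\alpha|)$, both of which produce nonnegative outputs; the real content is the inequality $\beta\ge\alpha$. Carrying $\alpha\ge0$ along in the hypothesis is what lets me resolve the absolute value in the inductive step.

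For the base case, the only edge in $\psi_2$ is the initial edge $(a,b)$, and the assumption $b\ge a$ is exactly $\beta=b\ge a=\alpha$ (with $\alpha=a>0$). For the inductive step, I take an edge $(\alpha,\beta)$ in row $n$ with $\beta\ge\alpha\ge0$ and verify the property for its two children in row $n+1$, both of which have first coordinate $\beta\ge0$. The right child is $(\beta,\lambda\beta+\alpha)$, and here $\lambda\beta+\alpha\ge\beta$ is immediate from $\lambda\ge2\ge1$ and $\alpha\ge0$. The left child is $(\beta,|\lambda\beta-\alpha|)$, which is where the hypothesis $\lambda\ge2$ is used. Since $\lambda\ge2$ and $\beta\ge0$ we have $\lambda\beta-\alpha\ge 2\beta-\alpha$, and since $\alpha\le\beta$ this is $\ge 2\beta-\beta=\beta\ge0$; thus $\lambda\beta-\alpha$ is already nonnegative, the absolute value acts trivially, and $|\lambda\beta-\alpha|=\lambda\beta-\alpha\ge\beta$. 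Both children therefore satisfy the required ordering, closing the induction.

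The main (and essentially only) delicate point is the treatment of the absolute value for the left child: a priori $|\lambda\beta-\alpha|$ could be strictly smaller than $\beta$, which is precisely what occurs when $\lambda<2$. The short computation above shows that $\lambda\ge2$ combined with the inductive comparison $\beta\ge\alpha$ forces $\lambda\beta-\alpha\ge\beta$, so the left step never reverses or even weakens the ordering. This monotone propagation is what makes the regime $\lambda\ge2$ so much simpler than the case $\lambda=\lambda_k<2$ treated later, and it is the property that will be exploited in bounding the averages $M(\psi_n)$ in the proof of Theorem~\ref{lambda_grand}.
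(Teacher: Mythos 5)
Your proof is correct and matches the paper's approach exactly: the paper's entire proof reads ``This is immediately proved by induction,'' and your argument is precisely that induction spelled out, with the key observation that $\lambda\ge2$ together with $\beta\ge\alpha\ge0$ forces $\lambda\beta-\alpha\ge\beta\ge0$, so the absolute value in the left child never acts.
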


\begin{proof}
This is immediately proved by induction. 
\end{proof}

\begin{Cor}
 If $b\ge a$ and $\lambda\ge 2$, then $M(\psi_n) = \lambda M(\psi_{n-1}) + (2p-1)M(\psi_{n-2})$.
\end{Cor}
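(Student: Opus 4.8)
The plan is to compute $M(\psi_n)$ by grouping the edges of row $n$ according to their parent in row $n-1$, and to use Lemma~\ref{Lem:lineaire} to remove the absolute value in the left-child rule.

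First I would record the consequence of the hypotheses $b\ge a$, $\lambda\ge 2$. By Lemma~\ref{Lem:lineaire}, every edge $(\alpha,\beta)$ satisfies $\beta\ge\alpha\ge 0$, so that $\lambda\beta-\alpha\ge 2\beta-\alpha\ge\alpha\ge 0$, whence $|\lambda\beta-\alpha|=\lambda\beta-\alpha$. Consequently the two children of an edge $(\alpha,\beta)$ of weight $w$ are exactly $(\beta,\lambda\beta+\alpha)$ of weight $pw$ and $(\beta,\lambda\beta-\alpha)$ of weight $qw$, with no absolute value left to handle.

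Next I would add the contributions of these two children to $M(\psi_n)$, namely
$$(\lambda\beta+\alpha)\,pw+(\lambda\beta-\alpha)\,qw=w\bigl(\lambda\beta(p+q)+\alpha(p-q)\bigr)=w\bigl(\lambda\beta+(2p-1)\alpha\bigr),$$
using $p+q=1$ and $p-q=2p-1$. Since every edge of $\psi_n$ is the child of exactly one edge of $\psi_{n-1}$, summing over all edges $(\alpha,\beta)$ of weight $w$ in $\psi_{n-1}$ gives
$$M(\psi_n)=\lambda\sum_{(\alpha,\beta)\in\psi_{n-1}}w\beta+(2p-1)\sum_{(\alpha,\beta)\in\psi_{n-1}}w\alpha,$$
and the first sum is $M(\psi_{n-1})$ by the very definition of the average.

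The only step requiring care is to identify the second sum with $M(\psi_{n-2})$, and this I expect to be the main (if modest) obstacle, since it mixes the first coordinates of row $n-1$ with the weights. Here I would regroup the edges of $\psi_{n-1}$ by their parent in $\psi_{n-2}$: the parent of $(\alpha,\beta)$ has the form $(\gamma,\alpha)$, so the first coordinate $\alpha$ of a row-$(n-1)$ edge is the second coordinate of its parent. A parent $(\gamma,\alpha)$ of weight $w_0$ has two children, both with first coordinate $\alpha$ and with weights $pw_0$ and $qw_0$, so their total contribution to $\sum w\alpha$ is $(pw_0+qw_0)\alpha=w_0\alpha$, which is precisely the contribution of the parent edge to $M(\psi_{n-2})$. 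Summing over all parents yields $\sum_{(\alpha,\beta)\in\psi_{n-1}}w\alpha=M(\psi_{n-2})$, and substituting gives the announced recurrence.
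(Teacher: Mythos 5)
Your proof is correct and follows essentially the same route as the paper: Lemma~\ref{Lem:lineaire} removes the absolute values, and the recurrence follows by regrouping the two children of each edge of row $n-1$ and summing over that row. The only difference is that you make explicit the identification $\sum_{(\alpha,\beta)\in\psi_{n-1}} w\alpha = M(\psi_{n-2})$ (via a second regrouping by parents in row $n-2$), a step the paper leaves implicit.
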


\begin{proof}
We deduce from Lemma~\ref{Lem:lineaire} that absolute values are never used in the computations of labels in $\tab$. 
Regrouping the contributions of the two children of an edge in row $n-1$, and summing over row $n-1$, we get the result.
\end{proof}

The preceding corollary shows that for $b\ge a$ and $\lambda\ge 2$, there exists constants $C$ and $C'$(depending on $a$ and $b$) such that 
$$
M(\psi_{n+2})=C \alpha^n + C' \alpha'^n,
$$
where $\alpha:=(\lambda+ \sqrt{\lambda^2+4(2p-1)})/2$ and $\alpha':=(\lambda - \sqrt{\lambda^2+4(2p-1)})/2$ are the roots of the polynomial $X^2-\lambda X -(2p-1)$. 
Observe that, by Lemma~\ref{Lem:lineaire}, if $b\ge a$ then all labels in the tree $\tab$ are larger than $b$, hence $M(\psi_{n+2})\ge b>0$. Since $\alpha >1$ and  $|\alpha'|<1$, we deduce that $C>0$, and Theorem~\ref{lambda_grand} is proved when $b\ge a$.

If $b<a$, we first consider the case where the sequence $(\ell_s)$ along the leftmost branch is unbounded. 
Then there exists a first $S\ge2$ such that $\ell_{S+1}\ge \ell_S$. 
We can then consider $\tab$ as the disjoint union of the trees ${\mathbf{T}}_{\lambda}(\ell_{s}, \lambda\ell_{s}+ \ell_{s-1})$, $2\le s\le S$, and the tree ${\mathbf{T}}_{\lambda}(\ell_{S}, \ell_{S+1})$. 
$M(\psi_{n+2})$ can be written as a convex combination of similar expressions in these trees. Since for each one of these trees, the labels of the first edge are well-ordered, Theorem~\ref{lambda_grand} is valid for them. A simple computation shows that the result extends to $\tab$.

It remains to consider the case where $(\ell_s)$ is bounded. We then consider $\tab$ as the disjoint union of the leftmost branch and infinitely many trees, namely the trees ${\mathbf{T}}_{\lambda}(\ell_{s}, \lambda\ell_{s}+ \ell_{s-1})$, $2\le s$, whose first-edge labels are well-ordered. For each $s$, there exists constants $C_s$ and $C'_s$ (depending on $\ell_{s-1}$ and $\ell_s$) such that 
$$
M(\psi_{n+2})=q^n \ell_{n+2} + \sum_{s=0}^{n-1} q^s p \left(C_s  \alpha^{n-s+1} + C'_s \alpha'^{n-s+1}\right).
$$
Using the fact that $C_s$ and $C'_s$ are bounded, we obtain that $M(\psi_{n+2})\sim K\alpha^{n}$ for some $K>0$, which ends the proof of Theorem~\ref{lambda_grand}.

% For $k=\infty$ ($\lambda=2$), we get $M(\pi_n) = 2 M(\pi_{n-1}) + (2p-1)M(\pi_{n-2})$. 
% The corresponding polynomial has two real roots ($1+\sqrt{2p}$ and $1-\sqrt{2p}$), 
% one of them of modulus larger than one. %Moreover, we have $\pi_n=\psi_n$.
% This can also be written for $\lambda>2$: 
% Moreover, since ${\mathbf{R}}_{\infty}(a, b)={\mathbf{T}}_{\infty}(a, b)$, we obtain that 
% $m_{n+1}/m_n \to (\lambda + \sqrt{\lambda^2+4(2p-1)})/2$.
% there exist two constants $C_1$ and $C_2$ such that 
% $$M(\psi_n) = C_1\left(\frac{\lambda + \sqrt{\lambda^2+4(2p-1)}}{2}\right)^n + C_2\left(\frac{\lambda - \sqrt{\lambda^2+4(2p-1)}}{2}\right)^n$$

\section{Reduced tree in the case $\lambda=\lambda_k$ for some $k\ge 3$}\label{Combi}

{F}rom now on, we fix an integer $k\geq 3$, and set $\lambda:=\lambda_{k}$. We keep the notations concerning the tree $\tab$ introduced in the preceding section.

We introduce the two matrices 
\begin{equation}
\label{matrices}
L :=
\begin{pmatrix}
0 & -1\\
1 & \lambda
\end{pmatrix}
\qquad \mbox{and}\qquad 
R :=
\begin{pmatrix}
0 & 1\\
1 & \lambda
\end{pmatrix}.
\end{equation}
Observe that the right child of an edge labelled $(\alpha, \beta)$ is labelled by 
$(\alpha, \beta)R$, 
and the left child is labelled by $(u, |v|)$ where 
$( u, v) = (\alpha, \beta) L$.

\begin{Def}
Any right child in a tree is said to be a {\em $0$-th left child}. 
For any integer $m>0$, a child in a tree is said to be an {\em $m$-th left child} iff it is the left
child of an $(m-1)$-th left child. %The child of the root of $\tkab$ is considered as a $(k-2)$-th left child.
\end{Def}

\begin{Prop}\label{Retour} 
Let $(\alpha, \beta)$ be the label of an edge in $\tkab$. 
The $(k-1)$-th left child of the right child of this edge is also labelled by $(\alpha, \beta)$.
\end{Prop}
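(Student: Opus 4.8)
The plan is to follow the labels along the unique path that starts at the right child of the given edge and then goes left $k-1$ times, expressing everything through the matrices $L$ and $R$. The only real difficulty is the absolute value taken at each left step; I would first compute the \emph{signed} evolution (pretending that no absolute value is ever needed), and then justify, by a positivity argument, that the absolute values do not interfere until the very last step, where they produce exactly the desired sign correction.

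For the signed computation, recall that the right child of the edge $(\alpha,\beta)$ is labelled $(\alpha,\beta)R$, and that one left step amounts to right multiplication by $L$ (before taking the absolute value of the second coordinate). Hence, if no absolute value intervened, the $(k-1)$-th left child of the right child would be labelled by $(\alpha,\beta)RL^{k-1}$. The characteristic polynomial of $L$ is $X^2-\lambda X+1$, and since $\lambda=2\cos(\pi/k)$ its eigenvalues are $e^{\pm i\pi/k}$; therefore $L^k=-I$, so $L^{k-1}=-L^{-1}$, and a direct computation gives $RL^{k-1}=\mathrm{diag}(1,-1)$. Thus the signed label of the $(k-1)$-th left child of the right child is $(\alpha,-\beta)$, whose entrywise absolute value is exactly $(\alpha,\beta)$.

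It remains to show that the intermediate absolute values really are harmless. For this I would introduce the sequence $(d_j)$ of signed second coordinates read along the left branch issued from the right child $w_0=(\beta,\lambda\beta+\alpha)$: it satisfies $d_0=\beta$, $d_1=\lambda\beta+\alpha$ and $d_{j+1}=\lambda d_j-d_{j-1}$, with $w_0L^j=(d_j,d_{j+1})$. Solving this linear recurrence with $\lambda=2\cos(\pi/k)$ yields the closed form
\[ d_j=\frac{\beta\sin\big((j+1)\pi/k\big)+\alpha\sin\big(j\pi/k\big)}{\sin(\pi/k)}. \]
Since every label in the tree is nonnegative we have $\alpha,\beta\ge0$, and for $0\le j\le k-1$ the angles $j\pi/k$ and $(j+1)\pi/k$ both lie in $[0,\pi]$, so each sine above is nonnegative; hence $d_j\ge0$ for $0\le j\le k-1$. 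A straightforward induction then shows that the \emph{actual} (absolute-value) label of the $m$-th left child of $w_0$ coincides with the signed one $(d_m,d_{m+1})$ for $0\le m\le k-2$ (no absolute value is effective because $d_{m+1}\ge0$), and that the label of the $(k-1)$-th left child is $(d_{k-1},|d_k|)$. Evaluating the closed form at $j=k-1$ and $j=k$ gives $d_{k-1}=\alpha$ and $d_k=-\beta$ (using $\sin\pi=0$ and $\sin((k-1)\pi/k)=\sin(\pi/k)$), so this label is $(\alpha,|-\beta|)=(\alpha,\beta)$, as claimed.

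I expect the main obstacle to be the careful bookkeeping of the absolute values: one must check that the signed second coordinate stays nonnegative all the way through the $(k-2)$-th step and flips only at the last one. This is precisely where the hypothesis $\lambda=2\cos(\pi/k)$ is essential, both for the identity $L^k=-I$ and for keeping the relevant angles within $[0,\pi]$ so that positivity holds; for a general $\lambda$ neither the exact return nor the required sign control would occur.
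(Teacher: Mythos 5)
Your proof is correct and follows essentially the same route as the paper: the closed form you derive for $d_j$ is exactly the sine formula the paper obtains for the entries of $RL^j$ by diagonalizing $L$, the nonnegativity of those sines for $j\le k-2$ is precisely what the paper invokes to dismiss the intermediate absolute values, and the final identity $RL^{k-1}=\mathrm{diag}(1,-1)$ giving the sign flip is identical. The only cosmetic difference is that you solve the second-order recurrence directly and spell out the absolute-value induction, whereas the paper phrases the same computation in terms of matrix entries.
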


\begin{proof}
An elementary calculation shows that $L=PDP^{-1}$, where 
$$ D:=\begin{pmatrix}
e^{i\pi/k} & 0          \\
0          & e^{-i\pi/k}
\end{pmatrix},
\quad
P:=\begin{pmatrix}
1   &  e^{i\pi/k}          \\
1   &  e^{-i\pi/k}
\end{pmatrix}.
% \quad\mbox{and }
% P^{-1}=\dfrac{1}{2i\sin(\pi/k)}\begin{pmatrix}
% -e^{-i\pi/k}   &  e^{i\pi/k}          \\
% 1              &  -1
% \end{pmatrix}.
$$
As a consequence, we get that for any integer $j$,
\begin{equation}
 \label{RtimesPowersOfL}
 RL^j =\dfrac{1}{\sin(\pi/k)}\begin{pmatrix}
 \sin\frac{j\pi}{k}      &  \sin\frac{(j+1)\pi}{k}      \\
 \sin\frac{(j+1)\pi}{k}  &  \sin\frac{(j+2)\pi}{k}
\end{pmatrix}.
\end{equation}
In particular, for $j\le k-2$, $RL^j$ has nonnegative entries, and
$$
RL^{k-1} = 
\begin{pmatrix}
1 & 0\\
0 & -1
\end{pmatrix}.
$$
Therefore, for $j\le k-2$, the $j$-th left child of the right child of the edge labelled by $(\alpha, \beta)$ is labelled by $(\alpha, \beta)RL^j$ and the $(k-1)$-th left child of the right child is labelled by $(\alpha, |-\beta|)$. 
\end{proof}

We now define the tree $\rkab$ as the subtree of $\tkab$ obtained by removing the left child of the initial edge and all the edges which are $(k-1)$-th left child. 

\begin{Prop}[Linearity in $\rkab$]\label{proprieteR} 
Whenever it exists, the left child of an edge in $\rkab$ labelled by $(\alpha, \beta)$ is labelled by $(\alpha, \beta) L$.
\end{Prop}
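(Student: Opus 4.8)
The plan is to reduce the statement to a single sign condition. Since the tree rule makes the left child of an edge $(\alpha,\beta)$ equal to $(u,|v|)$ with $(u,v)=(\alpha,\beta)L=(\beta,\lambda\beta-\alpha)$, proving that this left child equals $(\alpha,\beta)L$ amounts \emph{exactly} to proving $\lambda\beta-\alpha\ge 0$, so that the absolute value can be dropped. Thus I would restate the goal as: for every edge $(\alpha,\beta)$ of $\rkab$ that still possesses a left child in $\rkab$, one has $\lambda\beta\ge\alpha$.

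Next I would exploit the structure of $\rkab$ together with the matrix computation already carried out for Proposition~\ref{Retour}. Every non-initial edge of $\rkab$ is, for a unique $m$ with $0\le m\le k-2$, an $m$-th left child; going up $m$ steps along left edges reaches a right child, i.e.\ an edge of the form $(\alpha_0,\beta_0)R$ with $\alpha_0,\beta_0\ge 0$. Because $RL^j$ has nonnegative entries for all $j\le k-2$ (the very fact established in the proof of Proposition~\ref{Retour}), none of the absolute values is effective along this descent, so the tree label of our edge is exactly $(\alpha,\beta)=(\alpha_0,\beta_0)RL^{m}$ as a genuine matrix product. The key combinatorial observation is that such an edge keeps a left child in $\rkab$ if and only if that left child is not a $(k-1)$-th left child, that is, if and only if $m\le k-3$.

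The conclusion then falls out: under the hypothesis $m\le k-3$ we have $(\alpha,\beta)L=(\alpha_0,\beta_0)RL^{m+1}$ with $m+1\le k-2$, so by \eqref{RtimesPowersOfL} the matrix $RL^{m+1}$ again has nonnegative entries. Since $\alpha_0,\beta_0\ge 0$, the second coordinate of $(\alpha_0,\beta_0)RL^{m+1}$, which is precisely $\lambda\beta-\alpha$, is nonnegative. Hence $|\lambda\beta-\alpha|=\lambda\beta-\alpha$ and the left child is $(\beta,\lambda\beta-\alpha)=(\alpha,\beta)L$, as claimed.

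I expect the only delicate point to be the bookkeeping rather than any genuine difficulty: one must check that the naive matrix product $(\alpha_0,\beta_0)RL^{m}$ really reproduces the tree labels step by step (no hidden absolute value), which is guaranteed exactly on the range $j\le k-2$, and that this range dovetails with the combinatorial condition $m\le k-3$ characterising the edges of $\rkab$ that retain a left child. The whole content is that these two bounds are perfectly aligned, so that $RL^{m+1}$ is still in the nonnegative regime precisely when the left child has not been pruned from $\rkab$.
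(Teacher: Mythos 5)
Your proof is correct and follows essentially the same route as the paper: the paper's own (very terse) argument is precisely that edges of $\rkab$ are $j$-th left children with $j\le k-2$ and that $RL^j$ has nonnegative entries in that range, which is exactly the sign condition and bookkeeping you spell out. Your write-up merely makes explicit the descent to the nearest right-child ancestor and the alignment $m\le k-3\Rightarrow m+1\le k-2$ that the paper leaves implicit.
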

\begin{proof}
By definition, $\rkab$ contains only $j$-th left children, for $0\le j\le k-2$.
The proposition is a direct consequence of the fact that for such $j$'s, $RL^j$ has nonnegative entries.
\end{proof}

Considering $\rkab$ as a subtree of $\tkab$, any edge in $\rkab$ inherits its weight from $\tkab$.
As for $\tab$, we organize the edges of $\rkab$ in rows, and denote by $\pi_n\subset\psi_n$ the set of edges in row $n$ in $\rkab$ (see Figure~\ref{Fig:arbreR}).

\begin{center}
\begin{figure}
\input{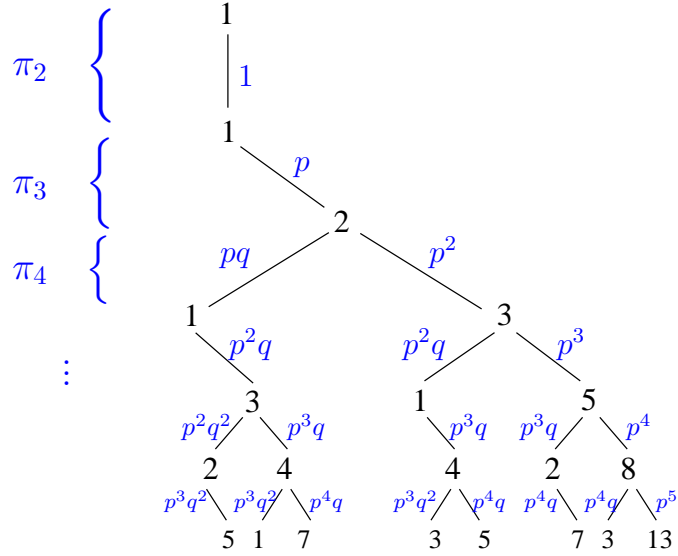}
\caption{The beginning of the tree ${\mathbf{R}}_{3}(1, 1)$, which is a subtree of ${\mathbf{T}}_{\lambda_3}(1, 1)$.}
\label{Fig:arbreR}
\end{figure} 
\end{center}

In Section~\ref{Averagepi}, we will first consider $M(\pi_n)$, which is easier to study than $M(\psi_n)$. 
Then, the next step (Section~\ref{Averagepsi}) will be to estimate $M(\psi_n)$ by partitioning the tree $\tkab$ in infinitely many copies of trees $\rks$, where $(\ell_s)$ is the sequence of numbers read along the leftmost branch of $\tkab$: $\ell_1:=a$, $\ell_2:=b$ and $\ell_{s+1}:=| \lambda_k\ell_{s}-\ell_{s-1}|$ for $s\ge 2$.

\begin{Rem}
\label{Mistake}
Let us explain why there is a mistake in the argument given in \cite{rittaud2006} (which deals with the particular case $k=3$ and $p=1/2$). 
The average growth rate is proved to be equal to some explicit value %(namely: $\alpha-1$, where $\alpha>1$ is such that $\alpha^3=2\alpha^2+1$) 
for two linearly independent pairs of initial values for the random Fibonacci sequence which are:
$(g_1,g_2)=(1,\varphi)$ and $(1,\varphi^{-1})$, where $\varphi=(1+\sqrt{5})/2$ is the golden ratio. 
It is then  asserted that, since the vectors $(1, \varphi)$ and 
$(1,\varphi^{-1})$ are linearly independent, any pair $(a,b)$ can 
be written as a linear combination of $(1, \varphi)$ and $(1,\varphi^{-1})$ to get the tree $\ttab$ 
as a linear combination of the trees ${\mathbf{T}}_{\lambda_3}(1,\varphi)$ and ${\mathbf{T}}_{\lambda_3}(1,\varphi^{-1})$;
%${\mathbf{T}}_{1,\varphi}$ and ${\mathbf{T}}_{1,\varphi^{-1}}$;
The conclusion follows that, since both of these latter trees show 
the same growth rate, the growth rate of a random Fibonacci sequence does not
depend on the initial values $a$ and $b$ (with the obvious
restriction that $ab\neq 0$). 
In fact, this way of proving the theorem is wrong, 
as it can be easily seen by writing the equality
$(1,1)=\varphi^{-2}(1,\varphi)+\varphi^{-1}(1,\varphi^{-1})$.
The tree deduced from this linear combination is a tree with
root $1$ and child of the root equal to $1$, but it 
does not correspond to ${\mathbf{T}}_{\lambda_3}(1,1)$ (since, for example, 
the left grandchild of the root is equal to $2\varphi^{-3}$ instead of $0$).
\end{Rem}

\section{Average $M(\pi_n)$ of row $n$  of $\rkab$}
\label{Averagepi}
For any $X\subset \tkab$, and $0\le i\le k-2$, we define $X^i$ as the subset of $X$ made 
of all its elements which are $i$-th left children ($X^0$ thus corresponds to right children). 
With the convention $\pi_{2}^{k-2}:=\pi_{2}$ and $\pi_{2}^i=\emptyset$ for $i\le k-3$, we get that each node of the tree $\rkab$ which has only one child is a $(k-2)$-th left child.

%We will see that, except when $p=q=1/2$, there is no simple way to express $M(\pi_{n})$ in terms of $(M(\pi_{m}))_{m<n}$ (see Corollary~\ref{MoyPi}). 

\subsection{A recursive formula for $M(\pi_n)$}
The next lemma gives, for any $0\le i\le k-2$, $M(\pi_{n}^i)$ as a function of $(M(\pi_{n-1}^{j}))_{0\le j\le k-2}$ and $(M(\pi_{n-2}^{j}))_{0\le j\le k-2}$.

\begin{Lemme}\label{Moy} 
For any integer $n\geq 4$ we have 
\begin{eqnarray*}
M(\pi_{n}^0) &=& \lambda p M(\pi_{n-1}) + p M(\pi_{n-2}) -pq M(\pi_{n-2}^{k-2}),\\
M(\pi_{n}^1) &=& \lambda q M(\pi_{n-1}^0) - pq M(\pi_{n-2}),\\
M(\pi_{n}^i) &=& \lambda q M(\pi_{n-1}^{i-1}) -q^2 M(\pi_{n-2}^{i-2}), \quad 2\le i\le k-2.
\end{eqnarray*}
\end{Lemme}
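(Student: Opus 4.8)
The plan is to compute each $M(\pi_n^i)$ directly, by describing the edges of $\pi_n^i$ as children of suitable edges in row $n-1$ and summing their contributions $\beta w$. Recall from Proposition~\ref{proprieteR} and the matrices $L,R$ that, inside $\rkab$, the right child of an edge labelled $(\alpha,\beta)$ of weight $w$ is labelled $(\beta,\lambda\beta+\alpha)$ and has weight $pw$, while the left child (when it exists, i.e. when the edge is not a $(k-2)$-th left child) is labelled $(\beta,\lambda\beta-\alpha)$ and has weight $qw$. The only quantity that is not immediately a multiple of some $M(\pi^j)$ is a sum of the form $\sum \alpha_e w_e$ over a set of edges, where $\alpha_e$ denotes the first coordinate; since $\alpha_e$ equals the second coordinate of the parent of $e$, I would handle such sums by regrouping the edges according to their parent in the previous row.

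First I would treat $M(\pi_n^0)$. Every edge of $\pi_{n-1}$ has its right child in $\rkab$, and these right children are exactly the elements of $\pi_n^0$. Summing $(\lambda\beta+\alpha)\,pw$ over $e=(\alpha,\beta)\in\pi_{n-1}$ splits into $\lambda p\,M(\pi_{n-1})$ and $p\sum_{e\in\pi_{n-1}}\alpha_e w_e$. To evaluate the latter I regroup by parent: each edge $(\alpha',\beta')\in\pi_{n-2}$ of weight $w'$ contributes $\beta'pw'$ through its right child, and $\beta'qw'$ through its left child whenever that left child exists. Since the left child is missing precisely when the parent lies in $\pi_{n-2}^{k-2}$, this gives $\sum_{e\in\pi_{n-1}}\alpha_e w_e = M(\pi_{n-2}) - q\,M(\pi_{n-2}^{k-2})$, whence the stated formula for $M(\pi_n^0)$. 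This is the step producing the correction term $-pq\,M(\pi_{n-2}^{k-2})$, and it is where the convention $\pi_2^{k-2}=\pi_2$ is used, so that the initial edge (whose left child was removed) is treated on the same footing as a genuine $(k-2)$-th left child.

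Next, for $1\le i\le k-2$, the edges of $\pi_n^i$ are exactly the left children of the edges of $\pi_{n-1}^{i-1}$ (the left child exists because $i-1\le k-3$). Summing $(\lambda\beta-\alpha)\,qw$ over $e=(\alpha,\beta)\in\pi_{n-1}^{i-1}$ gives $\lambda q\,M(\pi_{n-1}^{i-1}) - q\sum_{e\in\pi_{n-1}^{i-1}}\alpha_e w_e$. For $i=1$ the set $\pi_{n-1}^0$ consists of the right children of the edges of $\pi_{n-2}$, so regrouping by parent yields $\sum_{e\in\pi_{n-1}^0}\alpha_e w_e = p\,M(\pi_{n-2})$ and hence $M(\pi_n^1)=\lambda q\,M(\pi_{n-1}^0)-pq\,M(\pi_{n-2})$. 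For $2\le i\le k-2$ the set $\pi_{n-1}^{i-1}$ consists of the left children of the edges of $\pi_{n-2}^{i-2}$, so the same regrouping gives $\sum_{e\in\pi_{n-1}^{i-1}}\alpha_e w_e = q\,M(\pi_{n-2}^{i-2})$ and therefore $M(\pi_n^i)=\lambda q\,M(\pi_{n-1}^{i-1})-q^2 M(\pi_{n-2}^{i-2})$.

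The computations themselves are elementary; the one point requiring care is the bookkeeping of the $\sum\alpha_e w_e$ terms, and in particular keeping track of which edges fail to have a left child in $\rkab$. This is the only place where the structure of the reduced tree (removal of $(k-1)$-th left children and of the left child of the initial edge) enters, and it is precisely what forces the hypothesis $n\ge 4$: the recursions reach back to row $n-2$, which must be a \emph{generic} row for the parent-regrouping argument to apply uniformly, the sole exceptional row being row $2$, handled by the convention on $\pi_2^{k-2}$.
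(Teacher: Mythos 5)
Your proof is correct and takes essentially the same approach as the paper's: both express each $M(\pi_n^i)$ by summing the children's contributions and then evaluate the residual sums $\sum \alpha_e w_e$ by regrouping along the parent/grandparent relation in rows $n-1$ and $n-2$, with the missing left children (of $(k-2)$-th left children and of the initial edge, via the convention $\pi_2^{k-2}=\pi_2$) producing the correction term $-pq\,M(\pi_{n-2}^{k-2})$.
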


\begin{proof} 
Consider an edge $e$ in $\pi_{n}$ whose parent has weight $w$ and is labelled by $(\alpha, \beta)$.

Case 1: Assume $e\in\pi_{n}^0$ ($e$ is a right child). The contribution of $e$ to $M(\pi_n^0)$ is 
$(\lambda\beta + \alpha) wp = \lambda p . \beta w + p . \alpha w$. 
Observe that when $e$ runs over $\pi_{n}^0$, its parent runs over $\pi_{n-1}$ and brings a contribution $\beta w$ to $M(\pi_{n-1})$. 
Moreover, when $e$'s parent runs over $\pi_{n-1}^0$, $e$'s grandparent runs over $\pi_{n-2}$ and has contribution $\alpha w/p$ to $M(\pi_{n-2})$. 
When $e$'s parent runs over $\cup_{i=1}^{k-2}\pi_{n-1}^i$, $e$'s grandparent runs over $\cup_{i=0}^{k-3}\pi_{n-2}^i$ and has contribution $\alpha w/q$ to $M(\cup_{i=0}^{k-3}\pi_{n-2}^i)=M(\pi_{n-2})-M(\pi_{n-2}^{k-2})$.
This proves the first equality of Lemma~\ref{Moy}.

Case 2: Assume $e\in\pi_{n}^1$ ($e$ is a left child). Its contribution to $M(\pi_n^1)$ is 
$(\lambda \beta - \alpha) wq = \lambda q . \beta w - pq . \alpha w/p$. 
Observe that $e$'s parent is in $\pi_{n-1}^{0}$ (thus is a right edge), and brings a contribution $\beta w$ to $M(\pi_{n-1}^{0})$.
Moreover $e$'s grandparent is in $\pi_{n-2}$, has weight $w/p$, and its contribution to $M(\pi_{n-2})$ is $\alpha w/p$.
When $e$ runs over $\pi_{n}^1$, its parent runs over $\pi_{n-1}^0$ and its grandparent runs over $\pi_{n-2}$.

Case 3: Assume $e\in\pi_{n}^i$ for $2\le i\le k-2$ ($e$  is a left child).
The contribution of $e$ to $M(\pi_n^i)$ is 
$(\lambda \beta - \alpha) wq = \lambda q . \beta w - q^2 . \alpha w/q$. 
Observe that $e$'s parent is in $\pi_{n-1}^{i-1}$ (thus is a left edge), and brings a contribution $\beta w$ to $M(\pi_{n-1}^{i-1})$.
Moreover $e$'s grandparent is in $\pi_{n-2}^{i-2}$, has weight $w/q$, and $e$ is its only left grandchild in $\pi_n^{i}$. The contribution of $e$'s grandparent to $M(\pi_{n-2}^{i-2})$ is $\alpha w/q$.
When $e$ runs over $\pi_{n}^i$, its parent runs over $\pi_{n-1}^{i-1}$ and its grandparent runs over $\pi_{n-2}^{i-2}$.
This ends the proof of the lemma.
\end{proof}

In the proof of the preceding lemma, we only used the structure of the tree $\rkab$ and the linear relation linking the labels of the edges in this tree, but not the specific value $\lambda=\lambda_k$. In the next lemmas, this specific value plays a central role.

\begin{Lemme}\label{proprieteRk}
We have for any $n\geq k+2$
$$ 
qM(\pi_{n-2}^{k-3}) - \lambda M(\pi_{n-1}^{k-2}) = pq^{k-2} M(\pi_{n-k})
$$
\end{Lemme}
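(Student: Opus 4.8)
The plan is to exploit the fact that the third recurrence of Lemma~\ref{Moy} becomes a constant-coefficient, one-dimensional linear recurrence along the ``diagonals'' on which $n-i$ is constant, and that the specific value $\lambda=\lambda_{k}=2\cos(\pi/k)$ forces the associated Chebyshev polynomials to take controlled boundary values. The point where the hypothesis $\lambda=\lambda_k$ is genuinely used is exactly these boundary values; everything else is bookkeeping of indices.

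First I would fix $c:=n-k+1$ and set $y_i:=M(\pi_{c+i}^{i})$ for $0\le i\le k-2$. Both quantities on the left-hand side of the lemma live on this single diagonal: since $(n-1)-(k-2)=(n-2)-(k-3)=c$, one has $M(\pi_{n-1}^{k-2})=y_{k-2}$ and $M(\pi_{n-2}^{k-3})=y_{k-3}$. The third identity of Lemma~\ref{Moy} translates into
$$ y_i=\lambda q\, y_{i-1}-q^2\,y_{i-2},\qquad 2\le i\le k-2, $$
all the rows involved being $\ge 4$ once $n\ge k+2$.

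Next I would solve this recurrence explicitly. Writing $S_j:=\sin(j\pi/k)/\sin(\pi/k)$ for the Chebyshev values already visible in~\eqref{RtimesPowersOfL}, which satisfy $S_j=\lambda S_{j-1}-S_{j-2}$, $S_0=0$, $S_1=1$, an easy induction (base cases $i=0,1$ being trivial) gives $y_i=q^{i-1}S_i\,y_1-q^{i}S_{i-1}\,y_0$ for all $0\le i\le k-2$. Substituting $i=k-3$ and $i=k-2$ into $q\,y_{k-3}-\lambda\,y_{k-2}$ and collecting terms, the coefficient of $y_1$ becomes $-q^{k-3}(\lambda S_{k-2}-S_{k-3})=-q^{k-3}S_{k-1}$ and that of $y_0$ becomes $q^{k-2}(\lambda S_{k-3}-S_{k-4})=q^{k-2}S_{k-2}$, using $S_{j}=\lambda S_{j-1}-S_{j-2}$. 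Here the choice $\lambda=2\cos(\pi/k)$ is decisive, since it yields $S_{k-1}=1$ and $S_{k-2}=\lambda$ (equivalently $S_k=0$), whence
$$ q\,M(\pi_{n-2}^{k-3})-\lambda\,M(\pi_{n-1}^{k-2})=\lambda q^{k-2}\,M(\pi_{n-k+1}^{0})-q^{k-3}\,M(\pi_{n-k+2}^{1}). $$

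Finally I would eliminate the $\pi^{1}$-term with the second identity of Lemma~\ref{Moy} applied in row $n-k+2$ (legitimate precisely because $n\ge k+2$ makes that row $\ge 4$), namely $M(\pi_{n-k+2}^{1})=\lambda q\,M(\pi_{n-k+1}^{0})-pq\,M(\pi_{n-k})$. After substitution the two $M(\pi_{n-k+1}^{0})$ contributions cancel and only $pq^{k-2}M(\pi_{n-k})$ survives, which is the claim. The only delicate point is the normalization $S_{k-1}=1$, $S_{k-2}=\lambda$; I would also check the degenerate case $k=3$ separately, where the diagonal recurrence range $2\le i\le k-2$ is empty and $y_{k-3},y_{k-2}$ are just the base values $y_0,y_1$, so that the identity reduces directly to a single application of the second recurrence.
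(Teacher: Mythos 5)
Your proof is correct, but it follows a genuinely different route from the paper's. The paper proves the identity directly from the tree structure: for an edge $e$ in $\pi_{n-1}^{k-2}$ with label $(\alpha,\beta)$ and weight $w$, its parent lies in $\pi_{n-2}^{k-3}$ with weight $w/q$ and second coordinate $\alpha$, while its $(k-1)$-th ancestor lies in $\pi_{n-k}$ with weight $w/(pq^{k-2})$ and label $(u,v)$ determined by $(u,v)RL^{k-2}=(\alpha,\beta)$, i.e.\ $(u,v)=(\beta,\alpha-\lambda\beta)$ by~\eqref{RLk-2}; summing $\alpha w-\lambda\beta w=(\alpha-\lambda\beta)w$ over $e$ and using that $e\mapsto$ parent and $e\mapsto$ $(k-1)$-th ancestor are bijections onto $\pi_{n-2}^{k-3}$ and $\pi_{n-k}$ gives the lemma in one line. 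You instead never revisit the tree: you treat Lemma~\ref{Moy} as a black box, solve its third recurrence along the diagonal $\{\pi_{c+i}^{i}\}$ in closed form via the Chebyshev-type values $S_j=\sin(j\pi/k)/\sin(\pi/k)$, and close with the second recurrence of Lemma~\ref{Moy} at row $n-k+2$. The trigonometric input $S_{k-1}=1$, $S_{k-2}=\lambda$ is exactly the same fact that gives the paper its formula for $RL^{k-2}$ (the entries of $RL^j$ in~\eqref{RtimesPowersOfL} are the $S_j$'s), so both proofs use $\lambda=\lambda_k$ through the same identity, just deployed differently. What the paper's argument buys is brevity and the explicit ancestor bijection between $\pi_{n-1}^{k-2}$ and $\pi_{n-k}$, which is reused in the proof of Lemma~\ref{proprieteRk2}; what yours buys is the conceptual point that Lemma~\ref{proprieteRk} is a purely formal consequence of the recurrences of Lemma~\ref{Moy} plus one numerical identity, at the cost of heavier index bookkeeping, the row-range checks ($c+i\ge 4$ and $n-k+2\ge 4$, which you do verify), and the degenerate case $k=3$ (empty recurrence range, $S_{-1}=-1$ convention), which you correctly flag and which does work out since $\lambda_3=1$.
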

\begin{proof}
Consider an edge $e$ in $\pi_{n-1}^{k-2}$ with weight $w$ and label $(\alpha, \beta)$. 
Its parent is in $\pi_{n-2}^{k-3}$ and has weight $w/q$: Thus, it contributes for $\alpha w/q$ to $M(\pi_{n-2}^{k-3})$. 
Moreover, $e$'s $(k-1)$-th parent is in $\pi_{n-k}$, has weight $w/pq^{k-2}$ and its label $(u, v)$ is such that $(u, v)RL^{k-2}=(\alpha, \beta)$. Thus, $(u, v)=(\beta, \alpha -\lambda\beta)$ and this edge contributes for $(\alpha -\lambda\beta)w/pq^{k-2}$ to $M(\pi_{n-k})$.
The conclusion follows from the fact that when $e$ runs over $\pi_{n-1}^{k-2}$, its parent runs over $\pi_{n-2}^{k-3}$ and its $(k-1)$-th parent runs over $\pi_{n-k}$.
\end{proof}

\begin{Lemme}\label{proprieteRk2}
We have for any $n\geq k+2$
$$ 
M(\pi_{n}^{k-2}) = pq^{k-2} \bigl(M(\pi_{n-k}) - q M(\pi_{n-k}^{k-2}) \bigr)
$$
\end{Lemme}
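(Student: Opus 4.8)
The plan is to argue combinatorially in the same spirit as the proof of Lemma~\ref{proprieteRk}, by tracing each edge of $\pi_n^{k-2}$ back to a suitable ancestor. The key observation is that a $(k-2)$-th left child is reached from a right child by $k-2$ successive left steps, so its $(k-1)$-th parent is an arbitrary edge of $\rkab$ sitting one row above that right child; summing over these ancestors should convert $M(\pi_n^{k-2})$ into quantities attached to row $n-k$.

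First I would fix an edge $e\in\pi_n^{k-2}$, with label $(\alpha,\beta)$ and weight $w$, and identify its $(k-1)$-th parent $e''$, with label $(u,v)$ and weight $w''$. Since $e$ is obtained from $e''$ by first taking the right child and then $k-2$ left children (all of which lie in $\rkab$, being $j$-th left children with $0\le j\le k-2$), Proposition~\ref{proprieteR} gives $(\alpha,\beta)=(u,v)\,RL^{k-2}$, while the weights satisfy $w=pq^{k-2}w''$. Evaluating \eqref{RtimesPowersOfL} at $j=k-2$ and using $\sin(k\pi/k)=0$, $\sin((k-1)\pi/k)=\sin(\pi/k)$ and $\sin((k-2)\pi/k)=\sin(2\pi/k)=\lambda\sin(\pi/k)$, I get $RL^{k-2}=\left(\begin{smallmatrix}\lambda&1\\1&0\end{smallmatrix}\right)$, hence $\beta=u$. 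This is exactly the place where the special value $\lambda=\lambda_k$ enters. As $e$ ranges over $\pi_n^{k-2}$, the map $e\mapsto e''$ is a bijection onto $\pi_{n-k+1}$, so summing the contributions $\beta w=u\,pq^{k-2}w''$ yields
$$ M(\pi_n^{k-2})=pq^{k-2}\sum_{e''\in\pi_{n-k+1}} u\,w'', $$
where $u$ denotes the first coordinate of $e''$.

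It then remains to show that this first-coordinate sum over $\pi_{n-k+1}$ equals $M(\pi_{n-k})-qM(\pi_{n-k}^{k-2})$. For this I would group each $e''\in\pi_{n-k+1}$ under its parent $f\in\pi_{n-k}$: the first coordinate of $e''$ is the second coordinate of $f$, and $f$ always has a right child (weight $pw_f$) but has a left child (weight $qw_f$) precisely when $f\notin\pi_{n-k}^{k-2}$, since the left child of a $(k-2)$-th left child is a $(k-1)$-th left child and has been removed from $\rkab$. Summing over $f$ gives $pM(\pi_{n-k})+q\bigl(M(\pi_{n-k})-M(\pi_{n-k}^{k-2})\bigr)=M(\pi_{n-k})-qM(\pi_{n-k}^{k-2})$, which finishes the proof.

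The routine parts are the matrix evaluation and the bookkeeping of weights along the chain. The one genuinely delicate point is the left-child existence criterion together with the boundary case $n=k+2$, where $\pi_{n-k}=\pi_2$ reduces to the initial edge: there the convention $\pi_2^{k-2}:=\pi_2$ is exactly what is needed, since the initial edge indeed has no left child in $\rkab$ and must therefore be counted as a $(k-2)$-th left child for the formula to remain valid.
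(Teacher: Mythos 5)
Your proof is correct and follows essentially the same route as the paper's: both hinge on the identity $RL^{k-2}=\left(\begin{smallmatrix}\lambda&1\\1&0\end{smallmatrix}\right)$, which makes the second coordinate survive the $k$-step chain, and on the fact that precisely the edges of $\pi_{n-k}^{k-2}$ (including the initial edge, via the convention $\pi_2^{k-2}:=\pi_2$) lack a left child in $\rkab$. The paper simply runs the same correspondence downwards, counting the one or two descendants in $\pi_{n}^{k-2}$ of each edge of $\pi_{n-k}$, whereas you run it upwards through the intermediate row $\pi_{n-k+1}$ and then group by parents; the bookkeeping is identical.
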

\begin{proof}
Consider an edge $e$ in $\pi_{n-k}$ with weight $w$ and label $(\alpha, \beta)$. 
If $e\in \pi_{n-k}^{j}$, $j\le k-3$, this edge is the ancestor of two edges in $\pi_{n}^{k-2}$, which have labels $(\alpha, \beta)RRL^{k-2}$ and $(\alpha, \beta)LRL^{k-2}$, and weights $wp^2q^{k-2}$ and $wqpq^{k-2}$. 
Their contribution to $M(\pi_{n}^{k-2})$ is thus $\beta wpq^{k-2}$.
If $e\in \pi_{n-k}^{k-2}$, it is the ancestor of only one edge in $\pi_{n}^{k-2}$, having labels $(\alpha, \beta)RRL^{k-2}$  and weight $wp^2q^{k-2}$. Its contribution to $M(\pi_{n}^{k-2})$ is thus $\beta wp^2q^{k-2}$.
The conclusion follows from the fact that any edge in $\pi_{n}^{k-2}$ has a unique ancestor in $\pi_{n-k}$.
\end{proof}

\begin{Lemme}\label{MoyPi}
We have for any $n\geq k+2$
$$
M(\pi_n) = \lambda M(\pi_{n-1}) + (2p-1)M(\pi_{n-2}) + pq^{k-1}M(\pi_{n-k}) + (q-p)q M(\pi_{n-2}^{k-2}).
$$
\end{Lemme}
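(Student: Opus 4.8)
The plan is to derive the recursion for $M(\pi_n)=\sum_{i=0}^{k-2}M(\pi_n^i)$ by summing the three identities of Lemma~\ref{Moy} over $i$, collapsing the resulting partial sums of $M(\pi_{n-1}^i)$ and $M(\pi_{n-2}^i)$ back into $M(\pi_{n-1})$ and $M(\pi_{n-2})$, and then rewriting the two surviving boundary terms by means of Lemma~\ref{proprieteRk}. Only the defining relation $M(\pi_m)=\sum_{i=0}^{k-2}M(\pi_m^i)$ and the normalization $p+q=1$ are needed for the bookkeeping, while Lemma~\ref{proprieteRk} supplies the one genuinely structural input. (Note that Lemma~\ref{proprieteRk2} is not required for this statement.)

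First I would reindex the third identity: for $2\le i\le k-2$ it reads $M(\pi_n^i)=\lambda q M(\pi_{n-1}^{i-1})-q^2M(\pi_{n-2}^{i-2})$, so that $\sum_{i=2}^{k-2}M(\pi_n^i)=\lambda q\sum_{j=1}^{k-3}M(\pi_{n-1}^j)-q^2\sum_{j=0}^{k-4}M(\pi_{n-2}^j)$. Adding $M(\pi_n^1)=\lambda q M(\pi_{n-1}^0)-pqM(\pi_{n-2})$ completes the first sum to $\lambda q\sum_{j=0}^{k-3}M(\pi_{n-1}^j)=\lambda q\bigl(M(\pi_{n-1})-M(\pi_{n-1}^{k-2})\bigr)$, while on the other side I would write $\sum_{j=0}^{k-4}M(\pi_{n-2}^j)=M(\pi_{n-2})-M(\pi_{n-2}^{k-3})-M(\pi_{n-2}^{k-2})$, isolating the two boundary indices $k-3$ and $k-2$ (this identity holds for every $k\ge3$, both sides vanishing when $k=3$).

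Collecting everything together with $M(\pi_n^0)=\lambda pM(\pi_{n-1})+pM(\pi_{n-2})-pqM(\pi_{n-2}^{k-2})$, the coefficient of $M(\pi_{n-1})$ is $\lambda(p+q)=\lambda$; the coefficient of the pure term $M(\pi_{n-2})$ (kept distinct from $M(\pi_{n-2}^{k-3})$ and $M(\pi_{n-2}^{k-2})$) is $p-pq-q^2=p-q(p+q)=2p-1$; and the terms carrying $M(\pi_{n-2}^{k-2})$ combine as $-pq+q^2=(q-p)q$. This already produces three of the four terms of Lemma~\ref{MoyPi}, and the leftover is exactly $q^2M(\pi_{n-2}^{k-3})-\lambda q M(\pi_{n-1}^{k-2})$.

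The decisive step is to recognize this remainder as $q$ times the left-hand side of Lemma~\ref{proprieteRk}: since $qM(\pi_{n-2}^{k-3})-\lambda M(\pi_{n-1}^{k-2})=pq^{k-2}M(\pi_{n-k})$, multiplying by $q$ yields $q^2M(\pi_{n-2}^{k-3})-\lambda q M(\pi_{n-1}^{k-2})=pq^{k-1}M(\pi_{n-k})$, which is the last term of the claim; the hypothesis $n\ge k+2$ is precisely what makes Lemma~\ref{proprieteRk} applicable. I expect no conceptual obstacle here: the only real difficulty is the careful index bookkeeping required to keep the boundary superscripts $k-2$ and $k-3$ separate from the interior sums as they are collapsed, after which matching the leftover to Lemma~\ref{proprieteRk} closes the identity at once.
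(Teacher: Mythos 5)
Your proposal is correct and follows essentially the same route as the paper's own proof: summing the three identities of Lemma~\ref{Moy} over $i$, collapsing the interior sums into $M(\pi_{n-1})$ and $M(\pi_{n-2})$ while isolating the boundary superscripts $k-2$ and $k-3$, and then absorbing the leftover $q^2M(\pi_{n-2}^{k-3})-\lambda q M(\pi_{n-1}^{k-2})$ via Lemma~\ref{proprieteRk} multiplied by $q$. Your side remarks (the $k=3$ degenerate case and the fact that Lemma~\ref{proprieteRk2} is not needed here) are also accurate.
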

\begin{proof}
Using Lemma~\ref{Moy} and Lemma~\ref{proprieteRk}, we get
\begin{eqnarray*}
 M(\pi_n) 
&=& \sum_{i=0}^{k-2}M(\pi_n^i) \\
&=& \left(\lambda p M(\pi_{n-1}) + p M(\pi_{n-2}) -pq M(\pi_{n-2}^{k-2})\right) \\
&& + \left(\lambda q M(\pi_{n-1}^0) - pq M(\pi_{n-2})\right)\\
&& + \sum_{i=1}^{k-3} \lambda q M(\pi_{n-1}^{i}) - \sum_{i=0}^{k-4} q^2 M(\pi_{n-2}^{i})\\
&=& \lambda M(\pi_{n-1}) - \lambda q M(\pi_{n-1}^{k-2}) +(p-pq-q^2)M(\pi_{n-2}) \\
&& + (q^2-pq) M(\pi_{n-2}^{k-2}) + q^2 M(\pi_{n-2}^{k-3})\\
&=& \lambda M(\pi_{n-1}) + (2p-1)M(\pi_{n-2}) + (q-p)q M(\pi_{n-2}^{k-2})+ pq^{k-1}M(\pi_{n-k}) .
\end{eqnarray*}
\end{proof}

In the particular case $p=q=1/2$, Lemma~\ref{MoyPi} gives $M(\pi_n)$ as a function of $M(\pi_{n-1})$ and $M(\pi_{n-k})$. 
The next proposition gives a simple way to express $M(\pi_{n})$ in terms of $(M(\pi_{m}))_{m<n}$ in the general case. 

\begin{Prop}\label{recurrenceMn}
We have for any $n\geq 2k+2$
\begin{multline}
M(\pi_n) = \lambda M(\pi_{n-1}) + (2p-1)M(\pi_{n-2}) + pq^{k-1}\lambda M(\pi_{n-k-1})\\  + p^2q^{2k-2}M(\pi_{n-2k})
\end{multline}
\end{Prop}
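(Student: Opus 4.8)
The plan is to take Lemma~\ref{MoyPi} as the starting point and to eliminate the two occurrences of the ``anomalous'' quantity $M(\pi_{\cdot}^{k-2})$ --- the only term there that is not itself of the form $M(\pi_m)$ --- by combining that recurrence with a shifted copy of itself and with Lemma~\ref{proprieteRk2}. To keep the bookkeeping light I would abbreviate $a_m := M(\pi_m)$ and $b_m := M(\pi_m^{k-2})$, so that Lemma~\ref{MoyPi} reads $a_n = \lambda a_{n-1} + (2p-1)a_{n-2} + pq^{k-1}a_{n-k} + (q-p)q\,b_{n-2}$ and Lemma~\ref{proprieteRk2} reads $b_n = pq^{k-2}(a_{n-k} - q\,b_{n-k})$. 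After cancelling the common part $\lambda a_{n-1} + (2p-1)a_{n-2}$, the target identity is equivalent to the reduced claim $pq^{k-1}a_{n-k} + (q-p)q\,b_{n-2} = pq^{k-1}\lambda a_{n-k-1} + p^2q^{2k-2}a_{n-2k}$, and it is this that I would establish.

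First I would expand $a_{n-k}$ using Lemma~\ref{MoyPi} at index $n-k$, which is legitimate because $n \ge 2k+2$ forces $n-k \ge k+2$. This immediately produces the two terms $pq^{k-1}\lambda a_{n-k-1}$ and $p^2q^{2k-2}a_{n-2k}$ on the required right-hand side, at the cost of a residue $pq^{k-1}(2p-1)a_{n-k-2} + pq^{k-1}(q-p)q\,b_{n-k-2}$. Next I would rewrite the leftover term $(q-p)q\,b_{n-2}$ via Lemma~\ref{proprieteRk2} at index $n-2$ (legitimate since $n-2 \ge k+2$ when $k \ge 2$), turning it into $(q-p)pq^{k-1}a_{n-k-2} - (q-p)pq^{k}\,b_{n-k-2}$. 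Everything is then expressed in terms of $a_{n-k-2}$ and $b_{n-k-2}$, and it remains only to verify that both coefficients vanish.

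This final cancellation is the only genuine computation, and it is where $p+q=1$ enters: the coefficient of $a_{n-k-2}$ is $pq^{k-1}\bigl[(2p-1)+(q-p)\bigr] = pq^{k-1}(p+q-1) = 0$, and the coefficient of $b_{n-k-2}$ is $pq^{k-1}(q-p)q - (q-p)pq^{k} = 0$. Hence the reduced claim holds and the proposition follows. The only real obstacle is organisational rather than conceptual --- one must carry both the $a$-residue and the $b$-residue through the substitutions and notice that each collapses --- since no estimate or tree structure beyond Lemmas~\ref{MoyPi} and~\ref{proprieteRk2} and the relation $q = 1-p$ is needed; the statement is ultimately a purely algebraic consequence of those two recurrences.
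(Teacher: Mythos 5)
Your proof is correct and follows exactly the paper's own route: the paper's proof is the one-line remark that one applies Lemma~\ref{MoyPi} twice (to $M(\pi_n)$ and to $M(\pi_{n-k})$) and then invokes Lemma~\ref{proprieteRk2}, which is precisely your substitution scheme; your verification that the residual coefficients of $M(\pi_{n-k-2})$ and $M(\pi_{n-k-2}^{k-2})$ vanish via $p+q=1$ simply makes explicit the cancellation the paper leaves to the reader.
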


\begin{proof}
Applying twice Lemma~\ref{MoyPi}, (once for $M(\pi_n)$, then for $M(\pi_{n-k})$), the desired result follows from Lemma~\ref{proprieteRk2}. 
\end{proof}

\subsection{Study of $M(\pi_n)$}
\begin{Lemme}\label{Poly} 
Let $n\ge 2$ and let $Q(X)=X^n-\sum_{j=0}^{n-1}c_jX^j$, where $c_j>0$ for all $0\le j\le n-1$.
Then $Q$ has a unique positive real root $\alpha$. Moreover, $\alpha$ is of multiplicity $1$ and all the other roots of $Q$ have modulus strictly less than $\alpha$.

\end{Lemme}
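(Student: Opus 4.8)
The plan is to prove this in two stages: first establish existence and uniqueness of a positive real root, then show it dominates all other roots in modulus. Consider the function $f(X) := Q(X)/X^n = 1 - \sum_{j=0}^{n-1} c_j X^{j-n}$ for $X>0$. Each term $c_j X^{j-n}$ is strictly decreasing in $X$ on $(0,\infty)$ since $j-n<0$ and $c_j>0$, so $f$ is strictly increasing. As $X\to 0^+$ we have $f(X)\to -\infty$, and as $X\to+\infty$ we have $f(X)\to 1>0$. By the intermediate value theorem there is exactly one $\alpha>0$ with $f(\alpha)=0$, equivalently $Q(\alpha)=0$. Strict monotonicity of $f$ gives uniqueness immediately, and differentiating shows $Q'(\alpha)\neq 0$, so $\alpha$ is a simple root.

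For the domination of all other roots, the key step is to exploit the positivity of the coefficients. Let $z\in\C$ be any root of $Q$, so $z^n = \sum_{j=0}^{n-1} c_j z^j$. Taking moduli and using the triangle inequality,
$$
|z|^n = \left|\sum_{j=0}^{n-1} c_j z^j\right| \le \sum_{j=0}^{n-1} c_j |z|^j,
$$
since all $c_j>0$. This says precisely that $Q(|z|)\le 0$, i.e. $f(|z|)\le 0$ in the notation above. Because $f$ is strictly increasing and vanishes only at $\alpha$, the inequality $f(|z|)\le 0$ forces $|z|\le\alpha$. This already gives that $\alpha$ is the root of largest modulus.

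The remaining point, and the one requiring a little care, is to upgrade $|z|\le\alpha$ to the \emph{strict} inequality $|z|<\alpha$ for every root $z\neq\alpha$. The plan here is to analyze the equality case in the triangle inequality. If $|z|=\alpha$, then the chain above must be an equality throughout, which forces $\left|\sum c_j z^j\right| = \sum c_j|z^j|$; since the $c_j$ are positive, equality in the triangle inequality requires all the nonzero complex numbers $c_j z^j$ to be nonnegative real multiples of one another, i.e. to share a common argument. Taking $j=0$ (where $c_0 z^0 = c_0 > 0$ is a positive real) shows this common argument must be $0$, hence $c_j z^j$ is a nonnegative real for every $j$; in particular $c_1 z$ is real and nonnegative, forcing $z$ to be a nonnegative real. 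Combined with $|z|=\alpha>0$ this yields $z=\alpha$. The main obstacle is making this equality-case argument fully rigorous while handling the possibility that some $c_j z^j$ vanish, but since $c_j>0$ for all $j$ and we may isolate the $j=0$ term $c_0>0$, the argument goes through cleanly. Therefore every root $z\neq\alpha$ satisfies $|z|<\alpha$, completing the proof.
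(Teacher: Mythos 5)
Your proof is correct, and its core coincides with the paper's: both rest on the triangle inequality $|z|^n\le\sum_{j}c_j|z|^j$ applied to a root $z$, followed by the same equality-case analysis, in which positivity of the $c_j$ forces all terms $c_jz^j$ to share the argument of $c_0>0$, hence $z$ is a positive real (the paper phrases this as forcing $\theta\equiv 0$). Where you differ is in the surrounding organization. You normalize to $f(X)=Q(X)/X^n$, strictly increasing on $(0,\infty)$, which yields in one stroke the existence and uniqueness of the positive root $\alpha$, its simplicity (since $Q'(\alpha)=\alpha^n f'(\alpha)>0$), and the implication $Q(|z|)\le 0\Rightarrow |z|\le\alpha$. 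The paper argues each point separately: existence from $Q(0)=-c_0<0$; the impossibility of a root of modulus $\beta>\alpha$ from the scaling identity $\beta^n=\sum_j c_j\beta^j(\beta/\alpha)^{n-j}>\sum_j c_j\beta^j$ (which also covers uniqueness of the positive root); and simplicity by contradiction, since $Q(\alpha)=Q'(\alpha)=0$ would give $n\sum_j c_j\alpha^j=\sum_j jc_j\alpha^j$, impossible because every $j$ is less than $n$. Your monotone-function device is a bit more unified and gives simplicity essentially for free; the paper's comparison argument is equivalent in content but more piecemeal. Two points you gloss over are harmless but worth stating: every root is nonzero (as $Q(0)=-c_0\neq 0$), so $f(|z|)$ is defined; and the $j=1$ term you invoke in the equality case exists precisely because $n\ge 2$.
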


\begin{proof} 
Since $Q(0) = -c_0<0$, $Q$ has at least one positive real root $\alpha$. 
Assume $\beta e^{i\theta}\not=\alpha$ is another root of $Q$ such that $\beta \ge \alpha$. 
We have 
$$
\beta^n = \Big|\sum_{j=0}^{n-1}c_j \beta^j e^{ij\theta}\Big| \le \sum_{j=0}^{n-1}c_j \beta^j.
$$
On the other hand, 
$$
\beta^n 
=\left(\frac{\beta}{\alpha}\right)^n\alpha^n 
%= \left(\frac{\beta}{\alpha}\right)^n  \sum_{j=0}^{n-1}c_j \alpha^j
= \sum_{j=0}^{n-1}c_j \beta^j \left(\frac{\beta}{\alpha}\right)^{n-j}
%\ge \sum_{j=0}^{n-1}c_j \beta^j, 
$$
which gives a contradiction if $\beta > \alpha$. 
Now, if $\beta=\alpha$, we proved that $\Big|\sum_{j=0}^{n-1}c_j \beta^j e^{ij\theta}\Big| = \sum_{j=0}^{n-1}c_j \beta^j$.
Since $c_j>0$ for all $0\le j\le n-1$, this implies $\theta=2\ell\pi$ and contradicts the fact that $\beta e^{i\theta}\not=\alpha$.

If $Q(\alpha)=Q'(\alpha)=0$, then $n \sum_{j=0}^{n-1}c_j \alpha^j =n\alpha^n = \sum_{j=0}^{n-1}j c_j \alpha^j $, which is impossible since $c_j>0$ for all $0\le j\le n-1$. Therefore, $\alpha$ is a simple root.
\end{proof}

\begin{Lemme}\label{Polyk} Let $k\geq 3$ be fixed and $\lambda = \lambda_k$. The polynomial
$$P_{k}(X):=X^{2k}-\lambda X^{2k-1} + (1-2p)X^{2k-2} - \lambda pq^{k-1}X^{k-1} - p^2q^{2k-2}$$ 
has a unique positive real root, denoted by $\alpha_{k}$, which is of multiplicity $1$.

Moreover, if $p>p_c$, all the other roots have modulus strictly less than $\alpha_{k}$.
If $p\le p_c$, $P_k$ has 2 conjugate roots of modulus $q$, its positive root $\alpha_k$ is smaller than $q$, and all the other roots have modulus strictly less than $\alpha_{k}$.
\end{Lemme}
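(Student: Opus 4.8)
The plan is to reduce the whole statement to Lemma~\ref{Poly} by first peeling off an explicit quadratic factor of $P_k$ whose two complex conjugate roots have modulus exactly $q$, and then checking that the remaining degree-$(2k-2)$ quotient has precisely the sign pattern required by Lemma~\ref{Poly}. The threshold $p_c$ will appear at the very end, through the sign of $P_k(q)$.

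\textbf{Step 1 (the modulus-$q$ factor).} I would first show that $X^2-\lambda q X+q^2$ divides $P_k$, independently of $p$; its roots $q e^{\pm i\pi/k}$ are the natural candidates for the conjugate pair of modulus $q$ (note $e^{\pm i\pi/k}$ are the eigenvalues of $L$). Writing $\omega:=e^{i\pi/k}$, so that $\omega^k=-1$ and $\omega+\omega^{-1}=\lambda$, a direct substitution gives, after factoring out $q^{2k-2}$,
\[
P_k(q\omega)=q^{2k-2}\bigl[(q-p)-\lambda(q-p)\omega^{-1}-(2p-1)\omega^{-2}\bigr],
\]
and using $\lambda\omega^{-1}=1+\omega^{-2}$ together with $p+q=1$ the bracket collapses to $0$. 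Hence $q\omega$ and $q\overline\omega$, both of modulus $q$, are roots, so $X^2-\lambda qX+q^2$ is a genuine real factor. This is exactly where the special value $\lambda=\lambda_k=2\cos(\pi/k)$ enters.

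\textbf{Step 2 (the quotient fits Lemma~\ref{Poly}).} Set $Q_k(X):=P_k(X)/(X^2-\lambda qX+q^2)$, a monic polynomial of degree $2k-2$. To read off its coefficients I would expand $1/(X^2-\lambda qX+q^2)$ as a Laurent series $\sum_{m\ge0}h_mX^{-m-2}$ with $h_m=q^m u_m$, where $u_m=\sin((m+1)\pi/k)/\sin(\pi/k)$ are the Chebyshev values satisfying $u_m=\lambda u_{m-1}-u_{m-2}$. Convolving with the four nonzero coefficients of $P_k$ and simplifying via this recursion (and $q-1=-p$, $q+2p-1=p$), the coefficient $a_j$ of $X^j$ in $Q_k$ reduces, for $0\le j\le 2k-3$, to
\[
a_j=-p\,q^{2k-4-j}\Bigl(q\,u_{2k-2-j}+u_{2k-4-j}+\lambda\,u_{k-3-j}\Bigr),
\]
the last term being absent once $k-3-j<0$. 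Using $\sin\bigl((N-k)\pi/k\bigr)=-\sin(N\pi/k)$, the bracket collapses to a single sine when $j\le k-3$ and to $q\sin((2k-1-j)\pi/k)+\sin((2k-3-j)\pi/k)$ when $k-2\le j\le 2k-3$. \emph{The main obstacle is precisely to verify that these trigonometric expressions keep a constant sign}, so that $a_j<0$ for all $j$ and all $0<p<1$; this is a short but genuinely $\lambda_k$-specific check (for instance $q\sin((N+1)\pi/k)+\sin((N-1)\pi/k)>0$ for $2\le N\le k$, and $\sin((2k-1-j)\pi/k)<0$ for $0\le j\le k-3$). Granting it, $Q_k(X)=X^{2k-2}-\sum_{j=0}^{2k-3}|a_j|X^j$ with all $|a_j|>0$, so Lemma~\ref{Poly} applies verbatim and produces a unique positive root $\alpha_k$ of $Q_k$, simple, dominating in modulus all other roots of $Q_k$.

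\textbf{Step 3 (assembling the statement).} Since the quadratic factor has no positive root, $\alpha_k$ is the unique positive root of $P_k$ and is simple (being real, it is not a root of the quadratic). The remaining roots of $P_k$ are those of $Q_k$, all of modulus $<\alpha_k$, together with the conjugate pair $q\omega^{\pm1}$ of modulus $q$. It then remains only to compare $\alpha_k$ with $q$, and here I would evaluate directly
\[
P_k(q)=q^{2k-2}\bigl(2-\lambda-4p\bigr)=4q^{2k-2}(p_c-p).
\]
As $P_k$ is monic of even degree with $P_k(0)=-p^2q^{2k-2}<0$ and a unique simple positive root $\alpha_k$, it is negative on $(0,\alpha_k)$ and positive on $(\alpha_k,\infty)$, so $\mathrm{sign}\,P_k(q)=\mathrm{sign}(q-\alpha_k)$. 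Therefore $p>p_c\iff\alpha_k>q$, in which case every other root has modulus $<\alpha_k$; and $p<p_c\iff\alpha_k<q$, in which case the pair $q\omega^{\pm1}$ is dominant and all roots of $Q_k$ except $\alpha_k$ have modulus $<\alpha_k<q$, the boundary $p=p_c$ giving $\alpha_k=q$. This yields the three assertions of the lemma.
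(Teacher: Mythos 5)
Your proposal is correct and is essentially the paper's own argument: the paper proves the lemma by exactly your decomposition, writing $P_k$ as $(X^2-q\lambda X+q^2)$ times a monic degree-$(2k-2)$ polynomial whose lower-order coefficients are all negative, invoking Lemma~\ref{Poly} for that quotient, and then locating $\alpha_k$ relative to $q$ through the sign of $P_k(q)=q^{2k-2}(2-\lambda-4p)$, just as in your Steps 1--3 (your sharper reading of the boundary case, $\alpha_k=q$ when $p=p_c$, is also how the paper later uses the lemma). The only difference is bookkeeping: where you obtain the quotient's coefficients by a Laurent/Chebyshev convolution and reduce their signs to sine inequalities (which are indeed valid), the paper defines the same coefficients $a_j$ by identification of terms and proves their positivity via the matrix identity~\eqref{RtimesPowersOfL}, whose entries are precisely your $u_m=\sin\bigl((m+1)\pi/k\bigr)/\sin(\pi/k)$, so the two verifications coincide term by term.
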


\begin{proof} 
We claim that $P_{k}(X)$ can be rewritten as the product of $X^2-q\lambda X +q^2$ and 
$$
X^{2k-2} - pa_{2k-3}X^{2k-3} - p\sum_{j=0}^{k-3}a_{k-1+j}q^{k-3-j}X^{k-1+j} -p^2\sum_{j=0}^{k-2}a_{j}q^{2k-4-j}X^{j}
$$
where all coefficients $(a_j)_{0\le j\le 2k-3}$ are positive. 
Indeed, identifying terms of degree $j$ for $0\le j\le 2k-3$ yields to the following relations, from which we can inductively compute the coefficients $a_j$: 
\begin{eqnarray*}
a_0 &=& 1          \\
a_1 &=& \lambda  \\
a_j &=& \lambda a_{j-1}-a_{j-2} \qquad  \qquad \forall 2\le j\le k-2\\
a_{k-1} &=& \lambda + p(\lambda a_{k-2}-a_{k-3})\\
a_k &=& \lambda a_{k-1}-pa_{k-2} \\
a_j &=& \lambda a_{j-1}-a_{j-2} \qquad  \qquad \forall k+1\le j\le 2k-4\\
qa_{2k-3} &=& \lambda a_{2k-4}-a_{2k-5}.
%pq\lambda_k a_{2k-3}&=& (1-2p)+pa_{2k-4}+q^2
\end{eqnarray*}
Observe that for $0\le i\le k-3$, $(a_i, a_{i+1})=(1, \lambda)L^i=(0, 1)RL^i$. 
Hence, we deduce from~\eqref{RtimesPowersOfL} that $a_i>0$ for all $0\le i\le k-2$. 
Note that~\eqref{RtimesPowersOfL} also gives
\begin{equation}
 \label{RLk-2}
RL^{k-2} = 
\begin{pmatrix}
\lambda & 1\\
1 & 0
\end{pmatrix}.
\end{equation}
This allows us to write $(a_{k-3}, a_{k-2})L=(0, 1)RL^{k-2}=(1,0)$, from which we get $a_{k-2}=1$ and $\lambda a_{k-2}-a_{k-3}=0$. 
Hence $a_{k-1} = \lambda > 0$, and then $a_k=\lambda^2-p>0$.
Now, we have, for $1\le i\le k-3$, $(a_{k-2+i}, a_{k-1+i})=(p, \lambda)L^i=(q\lambda , p)RL^i$. 
Thus, using~\eqref{RtimesPowersOfL} once again, $a_j>0$ for all $j\le 2k-4$.  Next, using~\eqref{RLk-2}, we can write 
$$(a_{2k-5}, a_{2k-4})L=(q\lambda , p)RL^{k-2}=(q\lambda^2+p,q\lambda), $$
and we get $\lambda a_{2k-4}-a_{2k-5}=q\lambda$. Hence we obtain $a_{2k-3}= \lambda > 0$, which proves that all the coefficients $a_j$ are positive.

{F}rom the same equation above, we also get $a_{2k-4}=q\lambda^2+p$, and it is then a simple computation to check that, in the product, terms of respective degree $2k-2$, $2k-1$ and $2k$ also coincide with those of $P_k$. 

\smallskip
Since $X^2-q\lambda X +q^2$ has two conjugate (nonreal) roots of modulus $q$, we conclude by using Lemma~\ref{Poly} that $P_k$ has a unique real positive root $\alpha_k$, which is of multiplicity~$1$. 
Moreover, since $P_{k}(q)= q^{2k-2}(2-\lambda -4p)$, we have $\alpha_k>q$ if and only if $p>(2-\lambda)/4$. 
\end{proof}

Let us denote by $(\beta_{j})$ the roots of the polynomial $P_k$ with $\alpha_k=\beta_0$, and let $\beta_1$ and $\beta_2=\overline{\beta_1}$ be the two conjugate (nonreal) roots of $X^2-q\lambda_k X +q^2$.
Thanks to Proposition~\ref{recurrenceMn}, for any $n\ge 0$, we have
\begin{equation}
 \label{equMpik}
M(\pi_{n+2}) = Q_{0}\beta_{0}^n + \sum_{j\not=0} Q_j(n)\beta_{j}^n,
\end{equation}
where $Q_j$ is a polynomial (depending on $a$ and $b$) of degree strictly less than the multiplicity of the root $\beta_j$.

Moreover, since $M(\pi_{n+2})$ is a real number, the coefficient $Q_0$ is real and coefficients corresponding to pairwise conjugate roots are conjugate.

\begin{Lemme}
\label{C0isPositive}
As soon as $(a,b)\not=(0,0)$, the coefficient $Q_0$ is positive. 
If $p\le p_c$, the roots of $X^2-q\lambda_k X +q^2$ are simple roots of $P_k$. 
If $p< p_c$, the associated coefficients $Q_1$ and $Q_2$ are null.
\end{Lemme}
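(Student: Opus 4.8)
The plan is to prove the three assertions of Lemma~\ref{C0isPositive} in turn, relying on the spectral expansion~\eqref{equMpik} together with the positivity and ordering of roots established in Lemma~\ref{Polyk}.

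\textbf{Positivity of $Q_0$.} First I would argue that $M(\pi_{n+2})>0$ for every $n$ as soon as $(a,b)\neq(0,0)$, since the second coordinates $\beta$ of edges in $\rkab$ are nonnegative and at least one edge in each row contributes a strictly positive label with positive weight; indeed the leftmost branch of $\rkab$ carries strictly positive values once $(a,b)\neq(0,0)$. Now examine the growth of the two sides of~\eqref{equMpik}. By Lemma~\ref{Polyk}, $\beta_0=\alpha_k$ is a simple root and, \emph{in the regime $p>p_c$}, it strictly dominates the modulus of every other root; hence $M(\pi_{n+2})/\alpha_k^n\to Q_0$, and a positive quantity cannot tend to a nonpositive limit, forcing $Q_0>0$. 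In the regime $p\le p_c$ the dominant modulus is instead $q\ge\alpha_k$, so I would isolate the contribution of $\alpha_k$ differently: I would show $Q_0\ge 0$ by the same limiting argument applied along a subsequence chosen to kill the oscillating $q$-terms, and then rule out $Q_0=0$ by a more careful lower bound on $M(\pi_{n+2})$ coming from the (strictly positive, geometrically growing at rate $\alpha_k$) leftmost-branch contribution, which I expect to be the delicate point.

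\textbf{Simplicity of the roots of $X^2-q\lambda_k X+q^2$ when $p\le p_c$.} These two conjugate roots $\beta_1,\beta_2$ have modulus $q$. By the factorization exhibited in the proof of Lemma~\ref{Polyk}, $P_k(X)=(X^2-q\lambda_k X+q^2)\,S(X)$ with $S$ the explicit degree-$(2k-2)$ factor. A root $\beta_1$ of $X^2-q\lambda_k X+q^2$ is a multiple root of $P_k$ precisely when $S(\beta_1)=0$ as well. I would evaluate $S$ at $\beta_1$ (equivalently, check $\gcd$ of the two factors) and show $S(\beta_1)\neq 0$ under $p\le p_c$; because $\alpha_k\le q=|\beta_1|$ in this regime and $\alpha_k$ is the unique positive real root of $S$ with all other roots of $S$ of strictly smaller modulus (Lemma~\ref{Poly} applied to $S$), the nonreal $\beta_1$ of modulus $q\ge\alpha_k$ cannot be a root of $S$. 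Hence $\beta_1,\beta_2$ are simple roots of $P_k$, so the polynomials $Q_1,Q_2$ in~\eqref{equMpik} are constants.

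\textbf{Vanishing of $Q_1,Q_2$ when $p<p_c$.} Here $|\beta_1|=|\beta_2|=q>\alpha_k$ strictly, so if either $Q_1$ or $Q_2$ were nonzero the term $Q_1\beta_1^n+\overline{Q_1}\,\overline{\beta_1}^{\,n}$ would dominate~\eqref{equMpik} and force $M(\pi_{n+2})$ to oscillate in sign along the argument of $\beta_1$ (which is not a rational multiple of $\pi$ except in degenerate cases, but in any case takes values making the real part negative infinitely often). This contradicts $M(\pi_{n+2})>0$ for all $n$. I would make this rigorous by writing $Q_1=\rho e^{i\varphi}$ and noting that $2\rho q^n\cos(n\arg\beta_1+\varphi)$ is the leading term; choosing $n$ so that the cosine is bounded away from $0$ on the negative side (possible whenever $\arg\beta_1\notin\pi\Z$, which holds since $\beta_1$ is nonreal) yields $M(\pi_{n+2})<0$ for large such $n$, the desired contradiction. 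Therefore $Q_1=Q_2=0$. The main obstacle throughout is the borderline case $p=p_c$, where $\alpha_k=q$ and the domination arguments degenerate; there I must lean on the simplicity statement just proved and on the strict positivity of $M(\pi_{n+2})$ rather than on any gap between moduli.
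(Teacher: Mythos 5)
Your second and third steps essentially reproduce the paper's own argument: simplicity of $\beta_1,\beta_2$ via the factorization $P_k=(X^2-q\lambda_k X+q^2)S(X)$ together with the fact that every root of $S$ other than $\alpha_k$ has modulus $<\alpha_k\le q$, and the vanishing of $Q_1,Q_2$ for $p<p_c$ via the sign-oscillation contradiction with the positivity of $M(\pi_{n+2})$. Those parts are sound. The genuine gap is in your first step. For $p>p_c$ you assert that ``a positive quantity cannot tend to a nonpositive limit, forcing $Q_0>0$''; this inference is false, since a positive sequence can perfectly well tend to $0$, so the limit argument only yields $Q_0\ge 0$. Excluding $Q_0=0$ needs an extra idea: if $Q_0=0$, then either \emph{all} coefficients $Q_j$ vanish, whence $M(\pi_{n+2})\equiv 0$ (impossible when $(a,b)\neq(0,0)$, because every row contains a right child with strictly positive label --- note your leftmost-branch justification of positivity is also shaky, since labels such as $\lambda^2b-b=0$ for $k=3$, $a=0$ do occur), or else the dominant surviving part of~\eqref{equMpik} is a real, mean-zero combination of nonreal conjugate pairs, which must be strictly negative infinitely often, contradicting positivity. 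This Pringsheim-type step is what the paper's terse ``we conclude that $Q_0>0$'' implicitly invokes; your formulation replaces it by a statement that is simply wrong.

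Your plan for $p\le p_c$ then fails for two independent reasons. First, you try to extract $Q_0$ \emph{before} knowing $Q_1=Q_2=0$. When $p<p_c$ the oscillating roots have modulus $q>\alpha_k$, so after dividing by $\alpha_k^n$ their contribution is of size $(q/\alpha_k)^n\cos(n\theta+\varphi)$; no subsequence can ``kill'' it, since that would require $|\cos(n\theta+\varphi)|\le(\alpha_k/q)^n$ infinitely often, an exponentially small window that generic (e.g.\ Diophantine) $\theta$ never hits. The paper avoids this entirely by proving $Q_1=Q_2=0$ \emph{first} and only then letting the $\alpha_k$-term dominate; your ordering should be reversed accordingly. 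Second, your fallback lower bound is quantitatively wrong: the leftmost branch of $\rkab$ is the periodic path $R(L^{k-2}R)^\infty$, so by~\eqref{RLk-2} one period multiplies the label vector by $RL^{k-2}$ (largest eigenvalue $(\lambda+\sqrt{\lambda^2+4})/2$) and the weight by $pq^{k-2}$; its weighted contribution therefore grows like $\bigl(pq^{k-2}(\lambda+\sqrt{\lambda^2+4})/2\bigr)^{n/(k-1)}$, which is in general exponentially \emph{smaller} than $\alpha_k^n$ (for $k=3$, $p=1/2$: roughly $0.64^n$ versus $\alpha_3^n\approx 1.10^n$). So this branch cannot certify $Q_0>0$. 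The correct repair is the paper's ordering plus the oscillation argument above, applied uniformly in $p$.
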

\begin{proof}
Assume first that $p\le p_c$. 
By Lemma~\ref{Polyk}, the roots of $P_k/(X^2-q\lambda_k X +q^2)$ are of modulus smaller than $q$.
Hence $\beta_1=qe^{i\theta}$ and ${\beta_2}=qe^{-i\theta}$ (with $\theta\neq 0$ mod. $2\pi$) are simple roots of $P_k$. 
We also know that the associated coefficients are conjugate and that $\beta_{0}<q$ when $p< p_c$. 
Hence, we deduce from~\eqref{equMpik} that when $p< p_c$, $Q_1\neq0$ implies
$$ M(\pi_{n+2})\sim 2q^n \Re e(Q_1e^{in\theta}). $$
But this contradicts the fact that $M(\pi_{n+2})$ is positive for all $n$. 

If $p \ge p_c$, $\beta_0$ is the root of largest modulus of $P_k$. 
Moreover, since the coefficients $Q_1$ and $Q_2$ are null whenever $p< p_c$, we have 
$ M(\pi_{n+2})\sim C_{0}\beta_{0}^n$ whenever $p\not=p_c$. 
If $p=p_c$, we have $\beta_0=q$, thus 
$$ M(\pi_{n+2})\sim q^n \left( Q_0 + 2\Re e(Q_1(n)e^{in\theta}) \right). $$
If $(a,b)\not=(0,0)$, since $M(\pi_{n+2})$ is positive for all $n\ge1$, we conclude that $Q_0>0$ whatever $p$ is. 
\end{proof}

\section{Average $M(\psi_n)$ of row $n$ of $\tkab$, for $\lambda=\lambda_k$, $k\ge 3$} %${\mathbf{T}}_{\lambda_k}(a, b)$
\label{Averagepsi}
\subsection{Left-most branch of $\tkab$}
We denote by $(\ell_{n})_{n}$ the sequence corresponding to the ``left
branch'' of the tree $\tkab$, that is, the sequence defined as
\[\ell_{1}:=a\quad \ell_{2}:=b\quad
\ell_{n}:=|\lambda\ell_{n-1}-\ell_{n-2}|\
(n\geq 3).\]

We have the following result:

\begin{Prop}\label{EllBornee}
%For $0<\lambda<2$, t
The sequence $(\ell_{n})_{n}$ is bounded.
\end{Prop}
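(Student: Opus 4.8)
The plan is to exhibit a positive-definite quadratic form that is invariant under the linear recurrence governing the left branch, and then to show that taking absolute values can only decrease its value. Concretely, set
$$Q(x,y) := x^2 - \lambda xy + y^2.$$
Since $\lambda=\lambda_k = 2\cos(\pi/k) < 2$ for $k\ge 3$, the symmetric matrix of $Q$ has eigenvalues $1-\cos(\pi/k)$ and $1+\cos(\pi/k)$, both strictly positive; hence $Q$ is positive definite and satisfies $Q(x,y)\ge (1-\cos(\pi/k))(x^2+y^2)$. This is precisely the form whose level curves become genuine circles in the coordinates where $L$ is the rotation by $\pi/k$ appearing in the proof of Proposition~\ref{Retour}, which is why one expects it to be $L$-invariant.

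The first step is to check that $Q$ is invariant under the linear map $(\alpha,\beta)\mapsto(\beta,\lambda\beta-\alpha)$ attached to $L$. Writing $\tilde\ell_n := \lambda\ell_{n-1}-\ell_{n-2}$, a direct computation using $\tilde\ell_n+\ell_{n-2}=\lambda\ell_{n-1}$ gives
$$Q(\ell_{n-1},\tilde\ell_n)=Q(\ell_{n-2},\ell_{n-1}) \qquad (n\ge 3),$$
as follows from $Q(\beta,\lambda\beta-\alpha)-Q(\alpha,\beta)=(\lambda\beta-\alpha-\beta\lambda+\alpha)(\cdots)=0$; alternatively this is automatic from the fact that $L$ is conjugate to a rotation.

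The key step is to control the effect of the absolute value. Since $\ell_n=|\tilde\ell_n|$ and all the $\ell_j$ are nonnegative, either $\tilde\ell_n\ge 0$, in which case $\ell_n=\tilde\ell_n$ and $Q(\ell_{n-1},\ell_n)=Q(\ell_{n-2},\ell_{n-1})$; or $\tilde\ell_n<0$, in which case $\ell_n=-\tilde\ell_n$ and, comparing $Q(\ell_{n-1},-\tilde\ell_n)$ with $Q(\ell_{n-1},\tilde\ell_n)$,
$$Q(\ell_{n-1},\ell_n)=Q(\ell_{n-1},\tilde\ell_n)+2\lambda\ell_{n-1}\tilde\ell_n = Q(\ell_{n-2},\ell_{n-1})+2\lambda\ell_{n-1}\tilde\ell_n.$$
As $\lambda>0$, $\ell_{n-1}\ge 0$ and $\tilde\ell_n<0$, the correction term is $\le 0$. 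In all cases, therefore, $Q(\ell_{n-1},\ell_n)\le Q(\ell_{n-2},\ell_{n-1})$, so the sequence $n\mapsto Q(\ell_{n-1},\ell_n)$ is non-increasing.

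The conclusion then follows immediately: for all $n$,
$$(1-\cos(\pi/k))\,(\ell_{n-1}^2+\ell_n^2)\ \le\ Q(\ell_{n-1},\ell_n)\ \le\ Q(\ell_1,\ell_2)=Q(a,b),$$
so $(\ell_n)_n$ is bounded by $\sqrt{Q(a,b)/(1-\cos(\pi/k))}$. The only point requiring care is the sign bookkeeping in the absolute-value step, namely recognizing that the discrepancy $2\lambda\ell_{n-1}\tilde\ell_n$ introduced by a sign change has the favorable (nonpositive) sign; positive-definiteness of $Q$---which hinges on $\lambda_k<2$, and hence on $k\ge 3$---then converts the uniform bound on $Q$ into a uniform bound on $(\ell_n)_n$.
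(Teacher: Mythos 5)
Your proof is correct, and it takes a genuinely different route from the paper. The paper argues geometrically: it interprets $(\ell_{n-1},\ell_n)$ as abscissae of two points on a circle centered at the origin separated by the angle $\pi/k$, interprets the recurrence as a rotation (composed, when the absolute value bites, with a point symmetry), and shows through a trigonometric case analysis (an existence-and-uniqueness lemma for the circle, then a comparison of $\tan t$ and $\tan\alpha$) that the radius $R_n$ is non-increasing, whence $\ell_n\le R_n\le R_1$. You replace all of this with a Lyapunov-function argument: the quadratic form $Q(x,y)=x^2-\lambda xy+y^2$ is exactly invariant under the linear step $(\alpha,\beta)\mapsto(\beta,\lambda\beta-\alpha)$, the absolute-value step changes $Q$ by $2\lambda\ell_{n-1}\tilde\ell_n\le 0$, and positive definiteness (which is where $\lambda_k<2$, i.e. $k\ge3$, enters) converts the resulting monotonicity of $Q(\ell_{n-1},\ell_n)$ into a uniform bound on $\ell_n$. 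The two proofs are secretly the same mechanism --- a short computation shows $Q(\ell_{n-1},\ell_n)=R_n^2\sin^2(\pi/k)$, so your form is the squared radius of the paper's circle up to a constant --- but your algebraic packaging is shorter, avoids the angular bookkeeping and the separate treatment of the case $\ell_n=0$, and yields an explicit bound $\ell_n\le\sqrt{Q(a,b)/(1-\cos(\pi/k))}$; the paper's version buys geometric insight (the rotation picture behind Proposition~\ref{Retour}) and, like yours, applies verbatim to any $0<\lambda<2$.
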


When $\lambda_k=1$ ($k=3$), it is easily seen that the sequence is
upper-bounded by $\max(a,b)$. Strangely enough, the general case is
quite more difficult to apprehend.
Note that we cannot use Proposition~\ref{proprieteR} and $L^k=Id$ to say that this sequence
is periodic (of period $k$) and thus bounded, since the $\ell_n$'s
belong to the left branch of the full tree $\tkab$, which is not contained in $\rkab$.

We give here a proof based on a geometrical interpretation, which can be applied for any $0<\lambda <2$. This proof also appears in~\cite{janvresse2008b}.
The key argument relies on the following observation: Let $\theta$ be such that $\lambda=2\cos\theta$. 
Fix two points $P_0,P_1$ on a circle centered at the origin $O$, such that the oriented angle $(OP_0,OP_1)$ equals $\theta$. 
Let $P_2$ be the image of $P_1$ by the rotation of angle $\theta$ and center $O$. Then the respective abscissae $x_0$, $x_1$ and $x_2$ of $P_0$, $P_1$ and $P_2$ satisfy $x_2=\lambda x_1 - x_0$. 
We can then geometrically interpret the sequence $(\ell_n)$ as the successive abscissae of points in the plane. 

\begin{Lemme}[Existence of the circle]
  Let $\theta\in ]0,\pi[$. For any choice of $(x, x')\in\mathbb{R}_+^2\setminus\{(0,0)\}$, their exist a unique $R>0$ and two points $M$ and $M'$, with respective abscissae $x$ and $x'$, lying on the circle with radius $R$ centered at the origin, such that the oriented angle $(OM,OM')$ equals~$\theta$.
\end{Lemme}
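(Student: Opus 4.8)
The plan is to reduce the statement to the construction of a single point. Requiring that $M$ and $M'$ lie on a common circle centered at $O$ amounts to $|OM|=|OM'|$; together with the condition that the oriented angle $(OM,OM')$ equal $\theta$, this is exactly the statement that $M'$ is the image of $M$ under the rotation $r_\theta$ of angle $\theta$ and center $O$ (this is precisely the situation described just before the lemma, with $M=P_1$ and $M'=P_2$). Thus it suffices to find a point $M$ of abscissa $x$ such that $r_\theta(M)$ has abscissa $x'$, and then to set $R:=|OM|$ and $M':=r_\theta(M)$.

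First I would write $M=(x,y)$, the abscissa being imposed and $y$ left as the unknown. Since $r_\theta$ sends $(x,y)$ to $(x\cos\theta-y\sin\theta,\,x\sin\theta+y\cos\theta)$, the requirement that $r_\theta(M)$ have abscissa $x'$ reads $x\cos\theta-y\sin\theta=x'$. Because $\theta\in\,]0,\pi[$ we have $\sin\theta>0$, so this is a nondegenerate affine equation in $y$ with the unique solution $y=(x\cos\theta-x')/\sin\theta$. This pins down $M$, hence $R:=\sqrt{x^2+y^2}$ and $M':=r_\theta(M)$, and by construction $M$ has abscissa $x$, $M'$ has abscissa $x'$, both lie on the circle of radius $R$ centered at $O$, and $(OM,OM')=\theta$.

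It remains to check the two nondegeneracy points, which are the only real content. For positivity of $R$: if $R=0$ then $M=O$, i.e. $x=0$ and $y=0$; but substituting $x=0$ into the formula for $y$ gives $y=-x'/\sin\theta$, so $y=0$ forces $x'=0$, contradicting $(x,x')\neq(0,0)$. Hence $R>0$. For uniqueness: in any admissible configuration $M'$ is forced to equal $r_\theta(M)$ by the equal-radius and oriented-angle conditions, the abscissa constraint then forces the value of $y$ computed above, and both $R$ and $M'$ are determined from $M$; so all the data are unique.

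I do not expect a genuine obstacle here: the argument is a one-variable linear solve. The only places where the hypotheses are used are the solvability and uniqueness of the equation for $y$, which rely on $\sin\theta\neq0$ (guaranteed by $\theta\in\,]0,\pi[$), and the strict positivity of $R$, which is exactly where the assumption $(x,x')\neq(0,0)$ enters. For the record, one can eliminate $y$ to obtain the closed form $R^2=(x^2-2xx'\cos\theta+x'^2)/\sin^2\theta$, whose numerator is a positive-definite quadratic form precisely because $|\cos\theta|<1$; this yields an alternative way to see $R>0$.
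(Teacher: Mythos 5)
Your proof is correct, and it takes a mildly but genuinely different route from the paper's. The paper works in polar coordinates: it writes $M=(R\cos t,R\sin t)$ with $t\in\,]-\pi/2,\pi/2[$, reduces the two conditions $R\cos t=x$, $R\cos(t+\theta)=x'$ to the trigonometric equation $\cos\theta-\tan t\,\sin\theta=x'/x$, and invokes $\sin\theta\neq0$ for unique solvability — which forces a separate treatment of the case $x=0$ (where $t=-\pi/2$ and $R=x'/\cos(\theta-\pi/2)$). You instead fix the Cartesian form $M=(x,y)$, observe that the equal-radius and oriented-angle conditions are equivalent to $M'=r_\theta(M)$, and solve the single linear equation $x\cos\theta-y\sin\theta=x'$ for $y$; this is more uniform (no case split on $x=0$), makes the use of $(x,x')\neq(0,0)$ transparent via the positive-definite form $R^2=(x^2-2xx'\cos\theta+x'^2)/\sin^2\theta$, and gives a closed formula for $R$. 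What the paper's parametrization buys, and yours does not, is the side remark recorded at the end of its proof — that the argument $t$ of $M$ satisfies $t+\theta<\pi/2$ whenever $x'>0$ — which is not part of the lemma's statement but is reused in the proof of Proposition~\ref{EllBornee} to show that the radii $R_n$ decrease; with your proof one would have to re-derive that inequality (or an equivalent of it) when proving the proposition.
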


\begin{proof}
Assume that $x>0$. We have to show the existence of a unique $R$ and a unique $t\in ]-\pi/2,\pi/2[$ (which represents the argument of $M$) such that 
$$ R\cos t=x\quad\mbox{and}\quad R\cos(t+\theta)=x'. $$
This is equivalent to 
$$R\cos t=x \quad\mbox{and}\quad  \cos\theta - \tan t\ \sin \theta = \dfrac{x'}{x},$$
which obviously has a unique solution since $\sin \theta\neq 0$.

If $x=0$, the unique solution is clearly  $R=x'/\cos(\theta-\pi/2)$ and $t=-\pi/2$.

\emph{Remark:} Since $x_1>0$, we have $t+\theta<\pi/2$.
\end{proof}

\begin{proof}[Proof of Proposition~\ref{EllBornee}]
At step $n$, we interpret $\ell_{n+1}$ in the following way: Applying the lemma with $x=\ell_{n-1}$ and $x'=\ell_n$, we find a circle of radius $R_n>0$ centered at the origin and two points $M$ and $M'$ on this circle with abscissae $x$ and $x'$. Consider the image of $M'$ by the rotation of angle $\theta$ and center $O$. If its abscissa is nonnegative, it is equal to $\ell_{n+1}$, and we will have $R_{n+1}=R_n$. Otherwise, we have to apply also the symmetry with respect to the origin to get a point with abscissa $\ell_{n+1}$. The circle at step $n+1$ may then have a different radius, but we now show that the radius always decreases (see Figure~\ref{Fig:cercle}).

\begin{figure}\begin{center}
\input{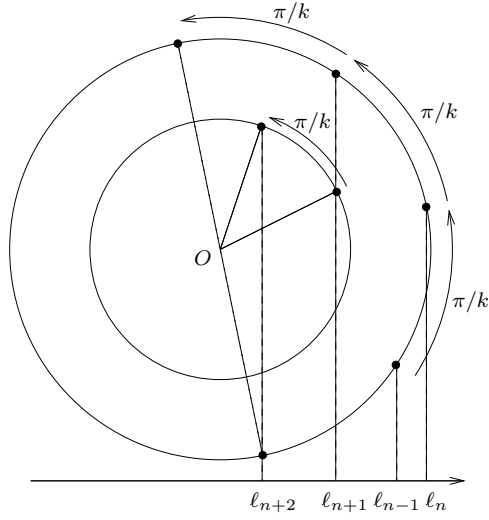}
\caption{$R_{n}=R_{n+1}$ is the radius of the largest circle, and $R_{n+2}$ is the radius of the smallest.}
\label{Fig:cercle}                  
\end{center}
\end{figure} 

Indeed, denoting by $\alpha$ the argument of $M'$, we have in the latter case $\pi/2-\theta<\alpha\le \pi/2$, $\ell_{n}=R_n\cos \alpha$ and $\ell_{n+1}=R_n\cos (\alpha+\theta+\pi)> 0$.
At step $n+1$, we apply the lemma with $x=R_n\cos \alpha$ and $x'=R_n\cos (\alpha+\theta+\pi)$. 
{F}rom the proof of the lemma, if $\ell_{n}=0$ (\textit{i.e.} if $\alpha=\pi/2$), $R_{n+1}=R_n\cos (\alpha+\theta+\pi)/\cos(\theta-\pi/2)=R_n$. 
If $\ell_{n}>0$, we have $R_{n+1}=R_n\cos \alpha/\cos t$, where $t$ is given by 
$$
\cos\theta - \tan t\ \sin \theta = \dfrac{\cos (\alpha+\theta+\pi)}{\cos \alpha} = -(\cos\theta - \tan \alpha\ \sin \theta).
$$
We deduce from the preceding formula that $\tan t + \tan\alpha = 2\cos\theta/\sin\theta>0$, which implies $t>-\alpha$. 
On the other hand, as noticed at the end of the proof of the preceding lemma, $t+\theta<\pi/2$, hence $t<\alpha$.
Therefore, $\cos \alpha<\cos t$ and $R_{n+1}<R_n$.

Since $\ell_n\le R_n\le R_1$ for all $n$, the proposition is proved. 
\end{proof}

\begin{Rem}
The behaviour of the sequence $(\ell_n)$ strongly depends on the initial values $a$ and $b$. 
It is proved in~\cite{janvresse2008b} that if $a/b$ admits a finite $\lambda$-continued fraction expansion, the sequence $(\ell_n)$ is ultimately periodic. On the other hand, when $\ell_n$ decreases exponentially fast to $0$, the exponent depends on the ratio $a/b$. 
Different examples of such behaviour are given in~\cite{janvresse2008b}.
\end{Rem}

\subsection{A formula for $M(\psi_n)$}

For any $s\geq 0$ and $n\ge 2$, we denote row $n$ of the tree $\rks$ by $\pi_{n, s}$.
In particular, $\pi_{n, 0}= \pi_{n}$ is row $n$ of the tree $\rkab$.

\begin{Prop}\label{SuccNu}
For any $n\geq 0$,
$$M(\psi_{n+2}) =
\sum_{m=0}^{\lfloor n/k\rfloor}\sum_{s=0}^{n-km} c_{n, m}
 (pq^{k-1})^m q^s M(\pi_{n+2-s-km, s}) ,
$$
where $c_{n, m} := {n \choose m} -(k-1){n\choose m-1}$.
\end{Prop}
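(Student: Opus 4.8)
The plan is to realize every edge of row $n+2$ of $\tkab$ as lying in a unique copy of a reduced tree and to read off its label and weight from that copy. The removed edges of $\rkab$ are of two kinds: the left child of an initial edge, which starts the next reduced tree down the leftmost branch, and the $(k-1)$-th left children, at which a fresh reduced tree is grafted. The decisive tool is Proposition~\ref{Retour}: since $RL^{k-1}=\mathrm{diag}(1,-1)$, a block of moves $RL^{k-1}$ acts as the identity on labels (up to the absolute value), so grafting a reduced tree at a $(k-1)$-th left child reproduces, $k$ rows lower and with an extra weight factor $pq^{k-1}$, exactly the labels the descent would have carried without that block. I call such a block a \emph{reset}.

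First I would encode the path from the initial edge to an edge $f\in\psi_{n+2}$ and delete from it all reset blocks. Because resets act trivially on labels, the remaining \emph{reset-free} descent carries exactly the label $\beta_f$, while the full weight of $f$ equals the weight of the reset-free descent times $(pq^{k-1})^m$, where $m$ is the number of deleted resets. The reset-free descent consists of $s$ leftmost-branch steps (each of weight $q$, each advancing the reduced-tree index by one) followed by an ordinary descent inside $\rks$; counting rows gives that $f$ is read off at row $\rho=n+2-s-km$ of $\rks$. Grouping edges by $(m,s)$ therefore yields
\[ M(\psi_{n+2})=\sum_{m,s} N(n,m,s)\,(pq^{k-1})^m q^s\, M(\pi_{n+2-s-km,\,s}), \]
where $N(n,m,s)$ is the number of admissible ways of reinserting $m$ reset blocks into a reset-free descent of the prescribed length, and the ranges are $0\le m\le \lfloor n/k\rfloor$, $0\le s\le n-km$. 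It then remains to prove $N(n,m,s)=c_{n,m}$.

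The core of the argument is the evaluation of this count. I would first check that $N(n,m,s)$ depends neither on $s$ nor on which edge of row $\rho$ is targeted: a reset may be inserted at any edge already reached, and by Proposition~\ref{Retour} the insertion point and the bottom of the inserted block carry the same label, so whether the insertion point is the root of its copy or an interior edge is immaterial, the two contributions coinciding row by row. The admissibility constraint is of ballot type: reading the descent from the top, the number of resets completed so far can never run ahead of what the rows already seen can support. This is the situation resolved by the reflection principle, giving $N(n,m,s)=\binom{n}{m}-(k-1)\binom{n}{m-1}=c_{n,m}$; in particular $c_{km,m}=\frac{1}{(k-1)m+1}\binom{km}{m}$ is the Fuss--Catalan number. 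Equivalently, I would verify that $c_{n,m}$ satisfies the Pascal-type recursion $c_{n,m}=c_{n-1,m}+c_{n-1,m-1}$ with $c_{n,0}=1$, match this recursion to a last-step analysis of $\psi_{n+2}$, and run the whole identity by induction on $n$.

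The main obstacle is precisely this combinatorial coefficient. The clean part, that a reset acts as the identity on labels and costs $pq^{k-1}$ and $k$ rows, is immediate from Proposition~\ref{Retour}; the delicate part is showing that reset blocks inserted at \emph{non-root} edges, as well as nested resets, contribute the same row-by-row totals as resets placed at roots, so that all contributions genuinely collapse onto the leftmost-branch trees $\rks$ and the count becomes independent of the target. Pinning down the admissibility (ballot) constraint, and hence the subtracted term $-(k-1)\binom{n}{m-1}$, is where the real work lies. I would fix the combinatorial model on the range $n<2k$, where at most one reset can occur and $c_{n,1}=n-k+1$ is transparent, and then propagate it through the induction above.
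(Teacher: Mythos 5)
Your skeleton is the paper's own: encode the descent to an edge of $\psi_{n+2}$ as a word in $R,L$, use Proposition~\ref{Retour} to delete the $m$ blocks $RL^{k-1}$ (label unchanged, weight divided by $(pq^{k-1})^m$), interpret the $s$ initial $L$'s of the reduced word as a shift down the leftmost branch into $\rks$, and reduce the proposition to showing that the number of words of length $n$ reducing to a fixed reduced word of length $n-km$ equals $c_{n,m}$, independently of that word. Your row and weight bookkeeping and the ranges of $(m,s)$ are correct.

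The gap is in the counting step, which you rightly call the crux, and neither of your two proposed devices closes it as stated. The reflection principle does not apply: the constraint here is a generalized (slope $1/k$) ballot problem, $c_{km,m}=\frac{1}{(k-1)m+1}\binom{km}{m}$ is a Fuss--Catalan number as you note, and for $k\ge3$ reflection produces a difference of two binomials with coefficient $1$, never the factor $(k-1)$; the standard tool for such counts is the cycle lemma, not reflection. Your fallback recursion is likewise insufficient as stated: $c_{n,m}=c_{n-1,m}+c_{n-1,m-1}$ together with $c_{n,0}=1$ is also satisfied by the plain binomials $\binom{n}{m}$, and your base cases (the range $n<2k$) only seed the columns $m=0,1$, while a new column opens at every $n=km$. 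The ingredient that actually pins the answer down, and which the paper makes explicit, is the column-start rule: when $n=km$ the reduced word is empty, so the class of words with no block inserted at the very end is empty and $c_{km,m}=c_{km-1,m-1}$; correspondingly any candidate formula must satisfy $\binom{km-1}{m}=(k-1)\binom{km-1}{m-1}$, which is exactly what selects $\binom{n}{m}-(k-1)\binom{n}{m-1}$ among the solutions of Pascal's rule. One must also justify that the complementary class (at least one block inserted at the end) has cardinality $c_{n-1,m-1}$; this is not immediate, because a block inserted ``at the end'' need not be contiguous in the final word (other blocks may be nested inside it), and the paper's argument is that inserting the remaining $m-1$ blocks anywhere but at the end of a word is equinumerous with inserting them anywhere into a word one letter shorter. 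With these two points supplied, your induction closes and coincides with the paper's proof.
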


\begin{proof}
Let us code any trajectory from row $2$ to row $n+2$ in the tree $\tkab$ by its 
successive steps $(X_i)_{3\le i\le n+2}$ ($X_i=R$ for a right step or $L$ for a 
left step). 
Because of Proposition~\ref{Retour}, the label of the last edge of the 
trajectory does not change when we remove a pattern $RL^{k-1}$ and its 
weight is divided by $pq^{k-1}$.
By successively removing all patterns $RL^{k-1}$ in $(X_i)_{3\le i\le n}$, we obtain a \emph{reduced} sequence, that is a subsequence which never contains $(k-1)$ successive $L$'s, except eventually at the beginning. Moreover, its last edge has the same label as the last edge of the initial trajectory. Let us denote by $s$ the number of left steps at the beginning of the reduced sequence and by $m$ the number of removals. 
By removing these $s$ left steps, we obtain a trajectory in the tree $\rks$, whose last edge has the same label as the last edge initial trajectory and whose weight has been divided by $(pq^{k-1})^mq^{s}$.

Conversely, to a reduced sequence of length $n-km$ (for $0\le m\le \lfloor n/k\rfloor$), we can associate a set of trajectories in $\tkab$ ending in row $n+2$ by successively adding $m$ patterns $RL^{k-1}$. Denote by $c_{n,m}$ the cardinal of this set. 
To conclude the proof of the proposition, it remains to prove that $c_{n, m} := {n \choose m} -(k-1){n\choose m-1}$.
It is obviously true for all $n$ when $m=0$. 
We claim that the sequence $(c_{n,m})$ satisfies $c_{n+1, m}=c_{n,m}+c_{n,m-1}$ for any 
$n\ge 0$ and $1\le m\le \lfloor (n+1)/k\rfloor$. 
Indeed, there are two disjoint classes of trajectories in $\tkab$ of length $n+1$ which can be obtained from a reduced sequence by successively adding $m$ patterns $RL^{k-1}$: 
\begin{itemize}
\item Those for which no pattern $RL^{k-1}$ is inserted at the end. Their number is equal to $c_{n,m}$. Observe that this class is empty for $n=km$, since in this case, we start with an empty sequence.
\item Those for which at least one pattern $RL^{k-1}$ is inserted at the end. They can be obtained by first inserting a pattern at the end, then by inserting $(m-1)$ other patterns anywhere but at the end of the new sequence. Their number is equal to $c_{n,m-1}$.
\end{itemize}

It is straightforward to check that the numbers ${n \choose m} -(k-1){n\choose m-1}$ satisfy the same induction, thus that they coincide with $(c_{n,m})$.
\end{proof}

\begin{Rem} 
The coefficients $c_{n, m}$ can be obtained by generalizing Pascal's
triangle (which corresponds to $k=1$): We start by writing an infinite column of $1$. 
On line $k$, we write a second $1$ at the right of the first one. This
new $1$ is the beginning of a second column obtained by Pascal's rule : 
$c_{n+1,m}=c_{n,m}+c_{n,m-1}$.
On line $2k$, that is $k$ lines after the beginning of the second column, we start a third
column with the value $c_{2k,2}=c_{2k-1,1}$ and go on using Pascal's rule. 
%$k$ at the right of the value $(k+1)$ of the second column, and so on. 
Each new column starts $k$ rows after the previous one, its first term is given by the rule
$c_{pk,p}=c_{pk-1,p-1}$ (that is: the first term of a column is equal 
to the term one row and one column before it), and the next terms of
the column are given by Pascal's rule.

Here is the beginning of the triangle in the case $k=4$:
\[\begin{array}{cccccc}
	&m=0&m=1&m=2&m=3&m=4\\
n=0 &1 &&&&\\
n=1 & 1 &&&&\\
  &1&&&&\\
  & 1&&&&\\
n=k &  1 & 1 &&&\\
 &  1 & 2 &&&\\
 &  1 & 3 &&&\\
 &  1 & 4 &&&\\
n=2k &  1 & 5 & 4 &&\\
 &  1 & 6 & 9 &&\\
 &  1 & 7 & 15 &&\\
 &  1 & 8 & 22 &&\\
n=3k  & 1 & 9 & 30 & 22 &\\
 &  1 & 10 & 39 & 52 &\\
 & 1 & 11 & 49 & 91 &\\
  & 1 & 12 & 60 & 130 &\\
n=4k  & 1 & 13 & 72 & 190 & 130 \\
  & 1 & 14 & 85 & 162 & 320 \\
 & \vdots & \vdots & \vdots & \vdots & \vdots
\end{array}\]
% 
% The previous generalizations of Pascal's triangle are given only for
% $k\geq 3$, one may wonder if it makes sense to consider the cases
% $k=2$ and $k=1$. For $k=2$, we have $\lambda_{k}=0$, %and $b=1$, 
% so the corresponding random Fibonacci tree is simply the complete binary tree
% with all its nodes labelled by $1$, and it is easily verified that all
% what precedes remains true. 
% \textbf{????????????????????????????????????}
% In particular, since the sum of the nodes 
% of $\nu_{j}=\sum_{m=2}^j\rho_{j}$ is simply $j-1$ for any $j\geq 2$,
% we easily get the following formula, which we do not know if it is of
% interest:
% \[2^{n}=\sum_{m=0}^{\lfloor n/2\rfloor} \left({n\choose m}-{n \choose 
% m-1}\right)(n+1-2m)=2\sum_{m=0}^{\lfloor n/2\rfloor}
% \frac{n+1-2m}{n+1-m}{n\choose n-m}.\]
% 
% When $k=1$, which corresponds to the usual Pascal's triangle, we have
% $\lambda_{k}=-2$, and the corresponding tree is equivalent to the one 
% given by $\lambda_{\infty}=2$ (the only difference being that right
% children and left children are exchanged). 
% For this tree it is trivial to evaluate the average value on each row 
% (see Remark~\ref{kinfini}), but it does not seem linked to Pascal's triangle.
\end{Rem}

\section{Proof of Theorem~\ref{Average1}}
\label{end}

Recall that we denoted by $(\beta_{j})$ the roots of the polynomial $P_k$ (studied in Lemma~\ref{Polyk}), with $\alpha_k=\beta_0$ the eigenvalue with largest modulus.
We deduce from~\eqref{equMpik} and Proposition~\ref{SuccNu} that there exists polynomials $Q_{j,s}$ (depending on $\ell_{s+1}$ and $\ell_{s+2}$) of degree less than the multiplicity of $\beta_j$, such that for any $n\ge 0$,
% $$
% M(\psi_n) = \sum_{j=0}^{2k-1} \beta_{j}^{n} \sum_{m=0}^{\lfloor n/k\rfloor}\sum_{s=0}^{n-km} c_{n, m} \left(\frac{pq^{k-1}}{\beta_{j}^{k}}\right)^m \sum_{s=0}^{n-km} C_{j,s} \left(\frac{q}{\beta_{j}}\right)^s .
% $$
% This expression can be rewritten as 
$$
M(\psi_{n+2}) = \sum_{j} \sum_{m=0}^{\lfloor n/k\rfloor} c_{n, m} \left(pq^{k-1}\right)^m \sum_{s=0}^{n-km}  q^s Q_{j,s}(n-km-s) \beta_{j}^{n-km-s} .
$$
We now study the contribution of each $\beta_j$: We will find an equivalent of the contribution of the dominant root $\beta_0$ and prove that the contribution of other roots is negligeable.

\begin{Lemme}\label{ctesbornees} 
For any $j$, the coefficients of the polynomials $(Q_{j, s})_s$ are uniformly bounded with respect to $s$. 
Moreover, the coefficients $(Q_{0, s})_s$ are positive.
\end{Lemme}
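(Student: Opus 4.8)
The goal is to establish two properties of the polynomials $Q_{j,s}$ appearing in the expansion of $M(\pi_{n+2-s-km,s})$: first, that their coefficients are uniformly bounded in $s$, and second, that the leading polynomials $Q_{0,s}$ (attached to the dominant root $\beta_0=\alpha_k$) are positive. My plan is to exploit the fact that, for fixed $k$ and $\lambda_k$, the recurrence governing $M(\pi_{n,s})$ is \emph{the same} for every $s$ (it is the one of Proposition~\ref{recurrenceMn}, whose characteristic polynomial is $P_k$, independent of $s$): only the initial conditions change with $s$, since they depend on $\ell_{s+1}$ and $\ell_{s+2}$. Thus the $(Q_{j,s})_s$ are obtained by solving, for each $s$, the \emph{same} linear system expressing the finitely many initial values $M(\pi_{2,s}),\dots,M(\pi_{2k+1,s})$ in terms of the coefficients of the $Q_{j,s}$. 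The matrix of this system is a fixed (generalized Vandermonde) invertible matrix determined by the roots $\beta_j$ and their multiplicities, so the coefficients of the $Q_{j,s}$ depend linearly and continuously on the initial data.

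The uniform boundedness then reduces to showing that the finitely many initial values $M(\pi_{2,s}),\dots,M(\pi_{2k+1,s})$ are bounded uniformly in $s$. This is where Proposition~\ref{EllBornee} enters decisively: the sequence $(\ell_n)_n$ along the leftmost branch is bounded, say by some constant $B$, so the root labels $(\ell_{s+1},\ell_{s+2})$ of the trees $\rks$ stay in a bounded region. Since each $M(\pi_{n,s})$ for $2\le n\le 2k+1$ is a finite sum (the number of edges in a fixed row of $\rkab$ is bounded in terms of $k$ only) of terms $\beta w$ where $\beta$ grows at most like $\lambda^n$ times the root labels, and the weights $w$ sum to at most $1$, I would bound $M(\pi_{n,s})\le C(k)\,B$ uniformly in $s$. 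Applying the fixed inverse matrix of the Vandermonde-type system then yields a uniform bound on all coefficients of all $Q_{j,s}$, giving the first assertion.

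For the positivity of $Q_{0,s}$, I would argue exactly as in Lemma~\ref{C0isPositive}. For each fixed $s$, the quantity $M(\pi_{n+2,s})$ is the average of row $n+2$ of the honest tree $\rks$, hence is nonnegative, and is strictly positive as soon as $(\ell_{s+1},\ell_{s+2})\neq(0,0)$. By Lemma~\ref{Polyk}, $\beta_0=\alpha_k$ is the simple root of $P_k$ of largest modulus (at least when $p\ge p_c$; the relevant regime here), so $M(\pi_{n+2,s})\sim Q_{0,s}\,\alpha_k^{\,n}$ as $n\to\infty$ for each fixed $s$, and the positivity of the left-hand side forces $Q_{0,s}>0$ (here $Q_{0,s}$ is the constant $Q_{0}$ of~\eqref{equMpik}, since $\alpha_k$ is simple). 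One must only check that $(\ell_{s+1},\ell_{s+2})\neq(0,0)$ for every $s$, which holds because consecutive terms of the leftmost branch cannot both vanish (if $\ell_{s+1}=\ell_{s+2}=0$ then the recurrence forces all earlier terms to vanish, contradicting $(a,b)\neq(0,0)$).

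The main obstacle I anticipate is the uniform boundedness rather than the positivity: one must make sure that the passage from ``bounded initial data'' to ``bounded coefficients of $Q_{j,s}$'' is genuinely uniform, i.e. that the linear map inverting the generalized Vandermonde system does not depend on $s$. This is precisely guaranteed by the $s$-independence of $P_k$ and of its root structure, so the inverse matrix is one fixed operator applied to a bounded family of input vectors. The only subtlety is in the repeated-root case ($p\le p_c$, where $\beta_1,\beta_2$ could in principle coincide with other roots), but Lemma~\ref{Polyk} already pins down the multiplicities uniformly in $s$, so the size and structure of the system are fixed and the argument goes through without change.
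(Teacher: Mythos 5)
Your proof is correct and follows essentially the same route as the paper, whose entire proof is the one-line citation ``This comes from Proposition~\ref{EllBornee} and Lemma~\ref{C0isPositive}'': uniform boundedness because the recurrence (hence the generalized Vandermonde system) is independent of $s$ while Proposition~\ref{EllBornee} bounds the initial data, and positivity of $Q_{0,s}$ by applying Lemma~\ref{C0isPositive} to the tree $\rks$ with $(\ell_{s+1},\ell_{s+2})\neq(0,0)$. Your write-up simply makes explicit the details the paper leaves implicit (your parenthetical restriction of the positivity argument to $p\ge p_c$ is unnecessary, since Lemma~\ref{C0isPositive} yields $Q_0>0$ for every $p$).
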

\begin{proof} This comes from Proposition~\ref{EllBornee} and Lemma~\ref{C0isPositive}.
\end{proof}

If $p>p_c$ (respectively if $p\le p_c$), let $\epsilon>0$ be small enough such that for all $j\ge 1$ (respectively $j\ge 3$), $\rho_j:=|\beta_j|(1+\epsilon)<\beta_0$.
The previous lemma implies that there exists a constant $K$ such that for all $n$, $|Q_{j,s}(n) \beta_{j}^{n}| \le K \rho_j^{n}$.
The contribution of $\beta_0$ to $M(\psi_{n+2})$ can be written as $U_{n,0}(\beta_0)$, where
$$
U_{n,0}(x) :=  x^{n} \sum_{m=0}^{\lfloor n/k\rfloor} c_{n, m} \left(\frac{pq^{k-1}}{x^{k}}\right)^m \sum_{s=0}^{n-km} Q_{0,s} \left(\frac{q}{x}\right)^s.
$$
The contribution of any other $\beta_j$ can be bounded by $K U_{n}(\rho_j)$, where
$$
U_{n}(x) :=  x^{n} \sum_{m=0}^{\lfloor n/k\rfloor} c_{n, m} \left(\frac{pq^{k-1}}{x^{k}}\right)^m \sum_{s=0}^{n-km}  \left(\frac{q}{x}\right)^s.
$$
% \begin{eqnarray*}
% M(\psi_n) 
% &=& \sum_{j=0}^{2k-1} \sum_{m=0}^{\lfloor n/k\rfloor}\sum_{s=0}^{n-km} c_{n, m} (pq^{k-1})^m q^s 
% C_{j,s} \beta_{j}^{n-km-s}\\
% &=& \sum_{j=0}^{2k-1} \beta_{j}^{n} \sum_{m=0}^{\lfloor n/k\rfloor}\sum_{s=0}^{n-km} c_{n, m} \left(\frac{pq^{k-1}}{\beta_{j}^{k}}\right)^m \sum_{s=0}^{n-km} C_{j,s} \left(\frac{q}{\beta_{j}}\right)^s
% \end{eqnarray*}

Observe that $U_{n, 0}(x)$ is bounded by a constant times $U_{n}(x)$ since the coefficients $(Q_{0, s})_s$ are positive and bounded.

\begin{Prop}\label{Unj}
Let $x>0$. We set $f(x):=x\left(1+\frac{pq^{k-1}}{x^k}\right)$.
There exists $C(x)>0$, depending only on $x$, such that 
\begin{itemize}
\item If $x>q$ and $x^k>(k-1)pq^{k-1}$, then $U_{n,0}(x)\sim C(x) \bigl(f(x)\bigr)^n$ as $n\to\infty$;
\item If $x>q$ then $|U_{n}(x)|\le C(x) \bigl(f(x)\bigr)^n$;
\item $U_{n}(q) = O(n)$ as $n\to\infty$;
\item If $x<q$, then $U_{n}(x)=O(1)$  as $n\to\infty$.
\end{itemize}
\end{Prop}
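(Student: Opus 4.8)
The plan is to analyze the asymptotics of the double sum
$$
U_{n}(x) =  x^{n} \sum_{m=0}^{\lfloor n/k\rfloor} c_{n, m} \left(\frac{pq^{k-1}}{x^{k}}\right)^m \sum_{s=0}^{n-km}  \left(\frac{q}{x}\right)^s
$$
by first collapsing the inner sum over $s$ and then the outer sum over $m$. First I would handle the inner geometric sum: when $x>q$ the ratio $q/x<1$, so $\sum_{s=0}^{n-km}(q/x)^s$ is bounded above by the convergent value $1/(1-q/x)=x/(x-q)$ and below by a positive constant, uniformly in $m$ and $n$. This reduces the problem, up to bounded multiplicative factors, to understanding $x^n\sum_{m} c_{n,m}(pq^{k-1}/x^k)^m$. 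The case $x<q$ is easy: the inner sum is then of order $(q/x)^{n-km}$, and one checks the whole expression is dominated by the leading behaviour of $\ell$-type terms and stays $O(1)$; the boundary case $x=q$ gives $\sum_{s=0}^{n-km}1 = n-km+1 = O(n)$ for the inner sum, and since only finitely many $m$ contribute non-negligibly one obtains $U_n(q)=O(n)$.

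The heart of the matter is the outer sum $S_n(y):=\sum_{m=0}^{\lfloor n/k\rfloor} c_{n,m} y^m$ with $y:=pq^{k-1}/x^k$, using the explicit formula $c_{n,m}={n\choose m}-(k-1){n\choose m-1}$ from Proposition~\ref{SuccNu}. Writing $c_{n,m}$ in terms of binomial coefficients, I would split $S_n(y)=\sum_m {n\choose m}y^m - (k-1)\sum_m {n\choose m-1}y^m$. The first sum is a truncated binomial expansion and the second is $y$ times a shifted copy of it; since the upper limit $\lfloor n/k\rfloor$ is far below the mode $ny/(1+y)$ precisely when $y$ is small (equivalently $x^k>(k-1)pq^{k-1}$), one expects the truncation to cost only a bounded factor and $S_n(y)\approx (1+y)^n\bigl(1-(k-1)\tfrac{y}{1+y}\bigr)$. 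Multiplying by $x^n$ and recalling $1+y=1+pq^{k-1}/x^k$, one recovers $x^n(1+y)^n = (f(x))^n$, which is exactly the claimed growth rate; the correction factor $1-(k-1)y/(1+y)$ is a positive constant (positivity being guaranteed by $x^k>(k-1)pq^{k-1}$) contributing to $C(x)$. This yields the sharp equivalent $U_{n,0}(x)\sim C(x)(f(x))^n$ in the first bullet, where the positive coefficients $Q_{0,s}$ from Lemma~\ref{ctesbornees} feed into the constant, and the matching upper bound $|U_n(x)|\le C(x)(f(x))^n$ in the second bullet follows from the same computation with the $Q_{0,s}$ replaced by $1$.

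The main obstacle I anticipate is making the truncation-at-$\lfloor n/k\rfloor$ rigorous, i.e.\ controlling the tail of the binomial sums and showing the truncation changes the answer only by a bounded factor. This requires verifying that the mode of the summand $c_{n,m}y^m$ lies below $n/k$, which is where the hypothesis $x^k>(k-1)pq^{k-1}$ enters: it forces $y$ small enough that the binomial weight $\binom{n}{m}y^m$ is still increasing (or at least the truncated sum captures a fixed proportion of the full sum) up to $m=\lfloor n/k\rfloor$. I would quantify this by comparing successive terms, estimating the ratio $c_{n,m+1}y/c_{n,m}\approx \tfrac{n-m}{m+1}y$, and checking it exceeds a constant bounded away from $1$ throughout the summation range, which pins the sum's magnitude to its last term times a geometric factor and simultaneously delivers both the asymptotic equivalent and the clean upper bound. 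The interplay between the combinatorial coefficients $c_{n,m}$ and the precise threshold $x^k=(k-1)pq^{k-1}$ is the delicate point that distinguishes the first bullet (sharp equivalence, valid only past the threshold) from the second (upper bound, valid for all $x>q$).
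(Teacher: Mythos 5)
Your outline does follow the same route as the paper: collapse the geometric $s$-sum, then treat $S_n(y):=\sum_{m=0}^{\lfloor n/k\rfloor}c_{n,m}y^m$, with $y:=pq^{k-1}/x^k$, as a truncated binomial-type sum whose behaviour depends on whether the truncation point lies beyond the mode. But the step you propose for making this rigorous is precisely the step that fails. The ratio of consecutive terms of ${n \choose m}y^m$ is $\frac{n-m}{m+1}\,y$, which crosses $1$ at $m\approx n\theta$, where $\theta:=y/(1+y)$; the hypothesis $x^k>(k-1)pq^{k-1}$ is equivalent to $\theta<1/k$. So in exactly the regime of the first bullet, the mode $n\theta$ lies \emph{strictly inside} the summation range $[0,\lfloor n/k\rfloor]$: the terms increase up to $m\approx n\theta$ and then decrease, the ratio is not bounded away from $1$, and the sum concentrates near the mode, not at its last term. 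Your mechanism (``last term times a geometric factor'') describes the opposite regime $\theta>1/k$, where the truncated sum is indeed dominated by its last term but is then exponentially smaller than $(1+y)^n$, so the claimed equivalence $U_{n,0}(x)\sim C(x)f(x)^n$ would be false there. What you actually need is the reverse statement: when $\theta<1/k$ the binomial mass above $n/k$ is asymptotically negligible. The paper gets this with no ratio analysis at all, by writing $c_{n,m}={n \choose m}\frac{n-km+1}{n-m+1}$, hence $S_n(y)=(1+y)^n\,\E\bigl[\frac{1-kM/n+1/n}{1-M/n+1/n}\mathbbmss{1}_{\{M\le\lfloor n/k\rfloor\}}\bigr]$ with $M$ binomial of parameters $(n,\theta)$, and invoking the law of large numbers: the expectation tends to $\frac{1-k\theta}{1-\theta}>0$. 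A Chernoff bound would do as well, but some concentration argument of this kind must replace your ratio test.

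There are also smaller slips. Your limiting constant is miscomputed: splitting $c_{n,m}={n \choose m}-(k-1){n \choose m-1}$ gives $S_n(y)\sim(1+y)^n\bigl(1-(k-1)y\bigr)$, since the shifted sum contributes $(k-1)y(1+y)^n$, not $(k-1)\frac{y}{1+y}(1+y)^n$; this matters because $1-(k-1)y>0$ is \emph{exactly} the hypothesis $x^k>(k-1)pq^{k-1}$, whereas your constant stays positive past the threshold and thus hides where the hypothesis enters (your sentence relating mode and truncation point is also inverted: the mode must lie below $\lfloor n/k\rfloor$, not the other way around). Next, for the sharp equivalence in the first bullet, bounding the $s$-sum above and below by constants yields only $U_{n,0}(x)\asymp f(x)^n$, not $\sim$: since the upper limit $n-km$ of the $s$-sum is coupled to $m$, you need the paper's sandwich --- fix $S$, set $A(x)=\sum_{s\le S}Q_{0,s}(q/x)^s$, bound the tail by $\delta$, compare with $m$-sums truncated at $\lfloor(n-S)/k\rfloor$ and $\lfloor n/k\rfloor$, and let $\delta\to0$. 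Finally, for $x\le q$ the clean reason is the identity $x^n\bigl(pq^{k-1}/x^k\bigr)^m(q/x)^{n-km}=p^mq^{n-m}$ together with $\sum_m c_{n,m}p^mq^{n-m}\le\sum_m{n \choose m}p^mq^{n-m}\le1$; it is not true that ``only finitely many $m$ contribute non-negligibly'' (order $\sqrt{n}$ values of $m$ do), so that justification should be dropped even though the conclusions $O(n)$ and $O(1)$ are correct.
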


\begin{proof}
\textbf{Case 1:} $x>q$. 
When $j=0$ (which corresponds to the dominant eigenvalue of $P_k$), the coefficients $(Q_{0, s})_s$ are positive and bounded, so
we can choose $S$ large enough such that 
$$
\left|\sum_{s=S+1}^{n-km} Q_{0,s} \left(\frac{q}{x}\right)^s \right|\le \delta.
$$
Set $A(x):=\sum_{s=0}^{S} Q_{0,s} \left(\frac{q}{x}\right)^s$.
Hence, 
$$
A(x)\ x^{n} \sum_{m=0}^{\lfloor (n-S)/k\rfloor} c_{n, m} \left(\frac{pq^{k-1}}{x^{k}}\right)^m
\le U_{n,0}(x)
\le (A(x) +\delta)\ x^{n} \sum_{m=0}^{\lfloor n/k\rfloor} c_{n, m} \left(\frac{pq^{k-1}}{x^{k}}\right)^m.
$$
Observe that, since $c_{n, m} = {n \choose m} -(k-1){n\choose m-1} = {n \choose m}\frac{n-km+1}{n-m+1}$, 
we can rewrite $\sum_{m=0}^{\lfloor n/k\rfloor} c_{n, m} \left(\frac{pq^{k-1}}{x^{k}}\right)^m$ as
$$
\left(1+\frac{pq^{k-1}}{x^k}\right)^{n} \E\left[\frac{1-kM/n+1/n}{1-M/n+1/n} \mathbbmss{1}_{\left\{M\le \lfloor n/k\rfloor\right\}}\right],
$$
where $M$ is a binomial random variable with parameters $(n, \theta)$. 
Since $M/n\to \theta$ almost surely as $n\to\infty$, we get that this expectation goes to $\frac{1-k\theta}{1-\theta}$ if $\theta< 1/k$ (that is,  if $x^k> (k-1)pq^{k-1}$), and the same is true if we replace $n$ by $n-S$. 
Therefore, if $x^k>(k-1)pq^{k-1}$, for $n$ large enough, 
$$
A(x)\left(\frac{1-k\theta}{1-\theta} -\delta\right)\le \frac{ U_{n,0}(x)}{x^{n}\left(1+\frac{pq^{k-1}}{x^k}\right)^{n}}\le (A(x) +\delta)\left(\frac{1-k\theta}{1-\theta}+\delta\right).
$$

For $j\not=0$, since $x>q$, $|U_{n}(x)|$ is bounded above, up to a multiplicative constant $ C(x)$, by
$$
x^{n} \sum_{m=0}^{\lfloor n/k\rfloor} c_{n, m} \left(\frac{pq^{k-1}}{x^{k}}\right)^m 
\le x^{n}\left(1+\frac{pq^{k-1}}{x^k}\right)^{n},
$$
because $c_{n, m}\le {n\choose m}$.

\textbf{Case 2:} $x=q$. We easily see that 
$$
U_{n}(q) = \sum_{m=0}^{\lfloor n/k\rfloor} c_{n, m} p^nq^{n-m} \ (n-km+1) \le n.
$$
% $$
% n \E\left[\frac{(1-kM/n+1/n)^2}{1-M/n+1/n} 1_{\left\{\frac{M}{n}\le\frac{1}{k}\right\}}\right]
% $$
% where $M$ is a binomial random variable with parameters $(n, p)$. 
% Since $M/n\to p$, we get that this expectation goes to $\frac{(1-kp)^2}{1-p}$ if $p<1/k$, and to $0$ if $p>1/k$.

\textbf{Case 3:} $x<q$. 
$$
U_{n}(x) = x^{n} \sum_{m=0}^{\lfloor n/k\rfloor}\sum_{s=0}^{n-km} c_{n, m} \left(\frac{pq^{k-1}}{x^{k}}\right)^m \frac{(q/x)^{n-km+1}-1}{q/x-1},
$$
which is, up to a multiplicative constant $C(x)$, less than
$$
\sum_{m=0}^{\lfloor n/k\rfloor}c_{n, m} p^m q^{n-m}
%\le \sum_{m=0}^{\lfloor n/k\rfloor}{n \choose m} p^m q^{n-m}
\le 1.
$$
\end{proof}

\begin{proof}[Proof of Theorem~\ref{Average1}]
%We have seen that $M(\psi_n) = \sum_{j=0}^{2k-1} U_{n,j}(\beta_{j})$. 
Assume $p>p_c$. 
We know from Lemma~\ref{Polyk} that $\beta_0=\alpha_k >q$, and all the other roots $\beta_j$ are such that $|\beta_j|<\alpha_k$. 

We use Proposition~\ref{Unj}, and need to understand the variations of $f$. Elementary computations show that $f(x)$ decreases when $x$ ranges from $0$ to $x_{\mbox{\scriptsize min}}:=\sqrt[k]{(k-1)pq^{k-1}}$, then increases. Observe that $f(q)=1$, hence $f(x_{\mbox{\scriptsize min}})\le1$.

We claim that $\alpha_k>x_{\mbox{\scriptsize min}}$. 
This is true if $p\le 1/k$ because, in this case, $q\ge x_{\mbox{\scriptsize min}}$. 
This remains true if $p>1/k$: Otherwise, we would have $q<\alpha_k\le x_{\mbox{\scriptsize min}}$, hence $f(\alpha_k)<1$. Moreover, if $|\beta_j|>q$, we also have $f(|\beta_j|)<1$, and the contributions of other $\beta_j$'s is, by Proposition~\ref{Unj}, at most linearly increasing with $n$. This would imply that $M(\psi_n)=O(n)$. But we know from~\cite{janvresse2008b} that when $p>1/k$, the $n$-th term of a $(p,\lambda_{k})$-random Fibonacci sequence almost surely grows exponentially fast. By Jensen's inequality, this is all the more true for its expected value $ M(\psi_n)$, so we get a contradiction.

It follows from Proposition~\ref{Unj} that the contribution to $M(\psi_{n+2})$ of $\beta_0$ is 
$U_{n,0}(\beta_0)\sim C(\beta_0) \bigl(f(\beta_0)\bigr)^n$ as $n\to\infty$, and that for $j\not=0$, the contribution of $\beta_j$, which is bounded by $KU_{n}(\rho_j)$, is negligeable with respect to $U_{n,0}(\beta_0)$. This ends the proof of 
Theorem~\ref{Average1} in the case $p>p_c$. 

Assume that $p=p_c$. We know from Lemma~\ref{Polyk} that $\beta_0 = q$, and all the other roots $\beta_j$ are such that $|\beta_j|\le q$. We thus deduce from Proposition~\ref{Unj} that $M(\psi_{n})$ grows at most linearly.

Assume $p<p_c$. 
By Lemma~\ref{C0isPositive}, we know that $(Q_{1, s})_s$ and $(Q_{2, s})_s$ are null. 
Moreover, we know from Lemma~\ref{Polyk} that $\beta_0 < q$ and that $\beta_j<\beta_0$ for all $j\ge 3$.
Using Proposition~\ref{Unj}, we conclude that $M(\psi_{n})$ is bounded. 
\end{proof}

\section{Non-analyticity in the neighbourhood of $2$}
\label{Sec:proof_of_cor}

\begin{proof}[Proof of Corollary~\ref{Cor:analyticite_p}]
By Theorem~\ref{lambda_grand}, we know that for $\lambda\ge2$, ${\mathcal{G}}(\lambda)$ is a root of the polynomial $Q_{\lambda}(X) := X^2-\lambda X -(2p-1)$. 
On the other hand, Theorem~\ref{Average1} says that for $\lambda=\lambda_{k}$ and $p>p_c$,  ${\mathcal{G}}(\lambda_k)=\alpha_k(p)\left[1+\frac{pq^{k-1}}{\alpha_k(p)^k}\right]$, where $\alpha_k(p)$ is a positive root of $P_k$. 
Thus, for any $k\ge 3$, we easily get that %since $\alpha_k(p)$ is a root of $P_k$,
$$
\alpha_k(p)^{2k-2}\ Q_{\lambda_k}\left({\mathcal{G}}(\lambda_k)\right) 
= 2pq^{k-1}\left(\alpha^k(p) + pq^{k-1}\right) > 0, 
$$
which proves that, for any $k\ge 3$, ${\mathcal{G}}(\lambda_k)$ is not a root of $Q_{\lambda_k}$.
\end{proof}

\begin{proof}[Proof of Corollary~\ref{Cor:analyticite_1/2}]
The growth rate of the expected value of a $(1/2,\lambda)$-random Fibonacci sequence has a very simple expression: It is equal to $\lambda$ when $\lambda\ge2$ (Theorem~\ref{lambda_grand}) and to $2\alpha_k - \lambda_k$, where $\alpha_k$ is the only positive root of the polynomial $X^k-\lambda_{k}X^{k-1}-1/2^k$, when $\lambda=\lambda_{k}$.
Indeed, in the case $p=1/2$, the polynomial $P_k(X)$ in Theorem~\ref{Average1} can be rewritten as $(X^k+1/2^k)Q_k(X)$, where $Q_k(X):=X^k-\lambda_kX^{k-1}-2^{-k}$. 
Moreover, $Q_k(\alpha_k) = 0$ implies that 
$\alpha_k\left(1+\frac{1}{2^k\alpha_k^k}\right)=2\alpha_k - \lambda_k$.

Observe that $Q_{k}(\lambda_k)<0$, which obviously implies that $\alpha_{k}> \lambda_k$.
Since $Q_{k}(\alpha_{k})=0$, we have $\alpha_{k}^{k-1}(\alpha_{k}- \lambda_{k}) = 1/2^{k}$. 
Thus, $\alpha_{k}>\lambda_{k}\ge 1$ proves that 
$0<\alpha_{k}- \lambda_{k}< 2^{-k} $.

Since $\lambda_{k}$ tends to $\lambda_{\infty}=2$ when $k$ goes to
infinity, if ${\mathcal{G}}'(2)$ exists, then we must have
\[{\mathcal{G}}'(2) = 
\lim_{k\rightarrow +\infty} \left(\frac{{\mathcal{G}}(\lambda_{k}) - {\mathcal{G}}(2)}{\lambda_{k}-2}\right) = 
1 + \lim_{k\rightarrow +\infty} 2 \frac{\alpha_{k}-\lambda_{k}}{\lambda_{k}-2}.\]

The numerator of the latter expression tends exponentially fast to $0$, whereas the denominator is equivalent to
$2\pi^2/k^2$, so we get ${\mathcal{G}}'(2)=1$.

If ${\mathcal{G}}$ is of class $C^2$ at $\lambda=2$, let us write its
Taylor expansion at order $2$:
\[{\mathcal{G}}(\lambda_{k})={\mathcal{G}}(2)+(\lambda_{k}-2){\mathcal{G}}'(2)+
\frac{(\lambda_{k}-2)^2}{2!}{\mathcal{G}}''(2)+O((\lambda_{k}-2)^3).\]

We then get that ${\mathcal{G}}''(2)$ is equal to the limit of the ratio
$2!\,2(\alpha_{k}-\lambda_{k})/(\lambda_{k}-2)^2$, which is equal to $0$. 
The nullity of the $n$-th derivative of $\mathcal{G}$ is obtained in a similar way, by an induction argument.

Hence, provided ${\mathcal{G}}$ is of class $C^\infty$ at $\lambda=2$, 
${\mathcal{G}}(2)=2$, ${\mathcal{G}}'(2) = 1$ and ${\mathcal{G}}^{(n)}(2) = 0$ for any $n\ge 2$.
The only possibility for ${\mathcal{G}}$ to be analytic at $\lambda=2$ is to satisfy
 ${\mathcal{G}}(\lambda)=\lambda$ on a neighbourhood of $2$. 
But we also have ${\mathcal{G}}(\lambda_k)=2\alpha_{k}-\lambda_k$, which would imply 
$\alpha_{k}=\lambda_k$ for $k$ large enough. 
This would contradict $P_k(\lambda_{k})=-2^{-k}<0$.
\end{proof}

\section{Open questions}
\label{Sec:open}

\subsection{Critical value}
Theorem~\ref{Average1} states that for $p=p_c$, the growth of $\E(g_{n})$ is at most linear. 
The proof of this result uses the fact that the labels $\ell_n$ on the leftmost branch of $\tkab$ are bounded. 
It is proved in~\cite{janvresse2008b} that if $a/b$ admits a finite $\lambda$-continued fraction expansion, the sequence $(\ell_n)$ is ultimately periodic. The arguments developed in the proof of Proposition~\ref{Unj} show that, in this case, $\E(g_{n})$ does grow linearly.
However, we believe that for most choices of the initial values $a$ and $b$, these labels decrease exponentially fast to zero, and that this ensures that $\E(g_{n})$ is bounded.

\subsection{Numerical simulation}
As suggested to us by Steven Finch, from INRIA, numerical evidence of the growth rate of the expected value of a random Fibonacci sequence is not easy to obtain. 
This is due to the different behaviour of $g_n$ and of $\E(g_n)$. 
For $\lambda=\lambda_k$, comparison with the result obtained in~\cite{janvresse2008b} shows that for $\frac{2-\lambda_k}{4}<p\le1/k$, the expected value of the $n$-th term of a random Fibonacci sequence increases exponentially fast, whereas the sequence contains  almost-surely a bounded subsequence. 
When $1/k<p<1$, numerical estimation of the growth rate of the expected value of $F_n$ given by Theorem~\ref{Average1} suggests that it
is strictly greater than the almost-sure growth rate. 
This would imply that the variance of $g_n$ increases exponentially fast with growth rate at least twice the growth rate of the expected value. 
% $$
% \frac{1}{n}\log \E[(g_n-\E(g_n))^2] \ge \frac{1}{n}\E[ 2\log (\E(g_n)-g_n)] \sim \frac{2}{n}\E[ \log (1-(\gamma/m)^n)] +2m
% $$
In~\cite{Makover2006} where the case $p=1/2$ and $\lambda=1$ is considered, the growth rate of the variance is proved to be equal to $1+\sqrt{5}$. 
It would be of interest to know better about the exact value of the variance for any $p$ and, more generally, about the moments of higher order.

\subsection{Generalization}
\subsubsection{Linear case}
It is also of interest to consider the {\em linear case}, given by the relation $g_{n+1}= \lambda g_{n}\pm g_{n-1}$, 
where the signs are given by an i.i.d. sequence of Bernoulli random variables of parameter $p$. 
We then have the easy induction
$\E(g_{n}) = \lambda \E(g_{n-1}) + (2p-1)\E(g_{n-2})$ for any $\lambda$.
The corresponding polynomial has two real roots $(\lambda\pm \sqrt{\lambda^2+4(2p-1)})/2$, and we easily get an explicit expression of $\E(g_{n})$ depending on the initial values. The question of interest in this setting would be to study the exponential growth of $\E(|g_{n}|)$.
The almost-sure growth rate of $|g_n|$ in the linear case is studied in~\cite{janvresse2008b} for $\lambda=\lambda_k$ ($k\ge3$) and $\lambda\ge2$, and turns out to be more difficult than in the non-linear case. 
The analysis of $\E(|g_{n}|)$ is also more intricate. Although we can embed the tree $\rkab$ in the tree of all possible sequences, this embedding is more complex than what we describe in the present article: 
The main reason is that after the removal of a pattern $RL^{k-1}$, left and right children are exchanged.
However we think our method can be adapted to the linear case.

\subsubsection{Underlying structure}
In the present paper, the underlying probabilistic structure is a Bernoulli scheme of parameter $p$. 
There is no significant doubt that our method extends without significantly new ideas to some more general processes, as for example the one in which the choice of the plus or minus sign is given by two coins alternatively tossed, the first one with parameter $p$ and the other one with parameter $p'$.

We may also investigate what happens with a deterministic rule, like the codage of an irrational rotation in the circle. This leads to some interesting constructions which involve substitutions. These are to be explained in a forthcoming paper.

% \subsection{ }
% Another question concerns the set of values in the $n$-th row of our trees. Does it have any property that allows to say that, in some way, random Fibonacci sequences produces ``good'' random numbers? 

\bibliography{rf-rosen.bib}

\end{document}